\def\DateTime{16/August/2014, 7:30 (Kyoto)}
\def\Version{Version $3.1$}
%%%%%%%%%%%%%%%%%%%%%%%%%%%%%%%%%%%%%%%%%%%%%%%%%%%%%%%%%%%%%%%%%%%%%%%%
% Title     : Toward a geometric analogue of Dirichlet's unit theorem  %
% Author    : Atsushi Moriwaki                                         %
% Version   : 3.1                                                      %
% Pages     : 15 pages (mathpazo, 12pt)                                %
% TeX-Type  : AmSLaTeX                                                 %
% File Name : GeomDirichlet.tex                                        %
%%%%%%%%%%%%%%%%%%%%%%%%%%%%%%%%%%%%%%%%%%%%%%%%%%%%%%%%%%%%%%%%%%%%%%%%
%%%
%% For options
%%%
\def\yes{\if00}
\def\no{\if01}
\def\iftenpt{\no}
\def\ifelevenpt{\no}
\def\iftwelvept{\yes}

\def\ifquery{\yes}

\iftenpt
\documentclass[leqno]{amsart}
\else\fi
\ifelevenpt
\documentclass[leqno,11pt]{amsart}
\else\fi
\iftwelvept
\documentclass[leqno,12pt]{amsart}
\else\fi

\usepackage{amssymb}
\usepackage{amscd}
\usepackage{verbatim}
\usepackage{mathrsfs}
%\usepackage{pstcol}
%\usepackage[all]{xy}
%\usepackage{color}

%%%
% Fonts
%%%
%\usepackage{newpxtext,newpxmath}
%\usepackage{newtxtext,newtxmath}
%\usepackage{pxfonts}
\usepackage[sc]{mathpazo} \linespread{1.05}
\usepackage[T1]{fontenc}

%\usepackage{mathptmx}

%\input xy
%\xyoption{all}

\ifelevenpt
\setlength{\topmargin}{10pt}
\setlength{\oddsidemargin}{32pt}
\setlength{\evensidemargin}{32pt}
\setlength{\textheight}{637pt}
\setlength{\textwidth}{395pt}
\else\fi

\iftwelvept
\setlength{\topmargin}{0pt}
\setlength{\oddsidemargin}{13pt}%{18pt}
\setlength{\evensidemargin}{13pt}%{18pt}
\setlength{\textheight}{665pt}%{680pt}
\setlength{\textwidth}{427pt}
\else\fi

%%%%%%%%%%%%%%%%%
%% environment %%
%%%%%%%%%%%%%%%%%
\theoremstyle{plain}
\newtheorem{Theorem}{Theorem}[section]
\newtheorem{Proposition}[Theorem]{Proposition}
\newtheorem{Lemma}[Theorem]{Lemma}

\newtheorem{Corollary}[Theorem]{Corollary}

\newtheorem{Claim}{Claim}[Theorem]

\theoremstyle{definition}

\newtheorem{Remark}[Theorem]{Remark}
\newtheorem{Example}[Theorem]{Example}

\newtheorem{Question}[Theorem]{Question}

\renewcommand{\theequation}{\arabic{section}.\arabic{Theorem}.\arabic{Claim}}

%%%%%%%%%%%%
%% Macros %%
%%%%%%%%%%%%
\def\rom{\textup}
\newcommand{\ZZ}{{\mathbb{Z}}}
\newcommand{\QQ}{{\mathbb{Q}}}
\newcommand{\RR}{{\mathbb{R}}}
\newcommand{\KK}{{\mathbb{K}}}
\newcommand{\CC}{{\mathbb{C}}}
\newcommand{\PP}{{\mathbb{P}}}

\newcommand{\OO}{{\mathcal{O}}}

\newcommand{\DDD}{{\mathscr{D}}}

\newcommand{\XXX}{{\mathscr{X}}}
\newcommand{\Proj}{\operatorname{Proj}}

\newcommand{\Rat}{\operatorname{Rat}}

\newcommand{\Hom}{\operatorname{Hom}}

\newcommand{\Spec}{\operatorname{Spec}}

\newcommand{\Supp}{\operatorname{Supp}}

\newcommand{\mult}{\operatorname{mult}}

\newcommand{\rank}{\operatorname{rk}}
\newcommand{\ord}{\operatorname{ord}}

\newcommand{\adeg}{\widehat{\operatorname{deg}}}

\newcommand{\Sym}{\operatorname{Sym}}

\newcommand{\Nef}{\overline{\operatorname{Amp}}}
\newcommand{\Pef}{\overline{\operatorname{Eff}}}

\newcommand{\Div}{\operatorname{Div}}

\newcommand{\an}{\operatorname{an}}

\newcommand{\NS}{\operatorname{NS}}

\newcommand{\rest}[2]{\left.{#1}\right\vert_{{#2}}}
%%%
%%%
\ifquery
\def\query#1{\setlength\marginparwidth{65pt} %80pt for other size
\marginpar{\raggedright\fontsize{7.81}{9} 
\selectfont\upshape\hrule\smallskip 
#1\par\smallskip\hrule}} 

\else
\def\query#1{}
\fi
%%%

\begin{document}

%%%%%%%%%%%
%% Title %%
%%%%%%%%%%%
\title[Toward a geometric analogue of Dirichlet's unit theorem]%
{Toward a geometric analogue \\ of Dirichlet's unit theorem}
\author{Atsushi Moriwaki}
\address{Department of Mathematics, Faculty of Science,
Kyoto University, Kyoto, 606-8502, Japan}
\email{moriwaki@math.kyoto-u.ac.jp}
\date{\DateTime, (\Version)}
%\keywords{}
\subjclass[2010]{Primary 14G15; Secondary 11G25, 11R04}
\begin{abstract}
In this article, we propose a geometric analogue of Dirichlet's unit theorem 
on arithmetic varieties \cite{MoD}, that is,
if $X$ is a normal projective variety over a finite field and $D$ is a pseudo-effective $\QQ$-Cartier divisor on $X$,
does it follow that $D$ is $\QQ$-effective?
We also give affirmative answers on an abelian variety and a projective bundle over a curve.
\end{abstract}

%%%%%
% 1st Draft
%%%%%
%\vskip -1cm
%\hfill\fbox{{\large\bf Draft for \Version}} %
%\footnote{I welcome any comments and suggestions. I will change several materials in the official version.}\par
%\vskip .5cm
%%%%%

\maketitle

\section*{Introduction}

Let $K$ be a number field and $O_K$ the ring of integers in $K$.
Let $K(\CC)$ be the set of all embeddings $K \hookrightarrow \CC$.
For $\sigma \in K(\CC)$, the complex conjugation of $\sigma$ is denoted by
$\overline{\sigma}$, that is, $\overline{\sigma}(x) = \overline{\sigma(x)}\ (x \in K$).
Here we define $\Xi_K$ and $\Xi^0_K$ to be
\[
\begin{cases}
\Xi_K := \left\{ \xi \in \RR^{K(\CC)} \mid \xi(\sigma) = \xi(\overline{\sigma})\ (\forall \sigma) \right\}, \\[1ex]
\Xi^0_K := \left\{ \xi \in \Xi_K \mid \sum\nolimits_{\sigma \in K(\CC)} \xi(\sigma) = 0 \right\}.
\end{cases}
\]
%We consider an equivalence relation $\sim$ on $K(\CC)$ in the following way:
%for $\sigma, \tau \in K(\CC)$, $\sigma \sim \tau$ if and only if either
%$\sigma = \tau$ or $\sigma = \overline{\tau}$,
%where $\overline{\tau}(x)$ ($x \in K$) is the complex conjugation of $\tau(x)$.
%We set $s = \#(K(\CC)) - 1$.
The Dirichlet unit theorem asserts that the group $O_K^{\times}$ consisting of
units in $O_K$ is a finitely generated abelian group of rank $s :=\dim_{\RR} \Xi^0_K$.

Let us consider the homomorphism $L : K^{\times} \to \RR^{K(\CC)}$ given by
\[
L(x)(\sigma) := \log \vert \sigma(x) \vert \quad (x \in K^{\times},\ \sigma \in K(\CC)).
\]
It is easy to see the following:
\begin{enumerate}
\renewcommand{\labelenumi}{(\roman{enumi})}
\item
For a compact set $B$ in $\RR^{K(\CC)}$,
the set $\{ x \in O_K^{\times} \mid L(x) \in B \}$ is finite.

\item
$L : K^{\times} \to \RR^{K(\CC)}$ 
 extends to
$L_{\RR} : K^{\times} \otimes \RR \to \RR^{K(\CC)}$.

\item
$L_{\RR}: O_K^{\times} \otimes \RR \to \RR^{K(\CC)}$ is injective.

\item
$L_{\RR}(O_K^{\times} \otimes \RR) \subseteq  \Xi^0_K$.
\end{enumerate}
By using (i) and (iii),  we can see that
$O_K^{\times}$  is a finitely generated abelian group. % is not difficult to prove it.
The most essential part of the Dirichlet unit theorem is to show that
$O_K^{\times}$ is of rank $s$,
%For this purpose, we consider the homomorphism $L : K^{\times} \to \RR^{K(\CC)}$ given by
%\[
%L(x)(\sigma) := \log \vert \sigma(x) \vert \quad (x \in K^{\times},\ \sigma \in K(\CC)).
%\]
%where 
%\[
%\begin{cases}
%\Xi_K := \left\{ \xi \in \RR^{K(\CC)} \mid \xi(\sigma) = \xi(\overline{\sigma})\ (\forall \sigma) \right\}, \\[1ex]
%\Xi^0_K := \left\{ \xi \in \Xi_K \mid \sum\nolimits_{\sigma \in K(\CC)} \xi(\sigma) = 0 \right\},
%\end{cases}
%\]
which is equivalent to see that, for any $\xi \in \Xi^0_K$,
there is $x \in O_K^{\times} \otimes \RR$ with $L_{\RR}(x) = \xi$.

In order to understand the equality $L_{\RR}(x) = \xi$ in terms of Arakelov geometry,
let us introduce several notations for arithmetic divisors on the arithmetic curve $\Spec(O_K)$.
An arithmetic $\RR$-divisor on $\Spec(O_K)$ is a pair $(D, \xi)$ consisting of an $\RR$-divisor $D$ on $\Spec(O_K)$ and $\xi \in \Xi_K$.
We often denote the pair $(D, \xi)$ by $\overline{D}$.
%Moreover, 
The arithmetic principal $\RR$-divisor $\widehat{(x)}_{\RR}$ of  $x \in K^{\times} \otimes \RR$ is the arithmetic $\RR$-divisor given by
\[
\widehat{(x)}_{\RR} := \left( \sum\nolimits_{P} \ord_P(x) [P], -2L_{\RR}(x) \right),
%\left( (x)_{\RR}, -2L_{\RR} (x) \right) = a_1\left((x_1), -2L(x_1)\right) + \cdots + a_r \left((x_r), -2L(x_r)\right),
\]
where $P$ runs over the set of all maximal ideals of $O_K$ and
\[
\ord_P(x) := a_1\ord_P(x_1) + \cdots + a_r \ord_P(x_r)
\]
for $x = x_1^{a_1} \cdots x_r^{a_r}$ ($x_1, \ldots, x_r \in K^{\times}$ and $a_1, \ldots, a_r \in \RR$).
%Note that, even for $x \in K^{\times} \otimes \RR$,  the arithmetic principal $\RR$-divisor $\widehat{(x)}_{\RR}$ is also defined
%as an extension of arithmetic principal divisors, that is,
%\[
%\widehat{(x)}_{\RR} = a_1 \widehat{(x_1)} + \cdots + a_r \widehat{(x_r)}
%\]
%for $x = x_1^{a_1} \cdots x_r^{a_r}$ ($x_1, \ldots, x_r \in K^{\times}$ and $a_1, \ldots, a_r \in \RR$).
The arithmetic degree $\adeg(\overline{D})$ of an arithmetic $\RR$-divisor $\overline{D}= \left(\sum_{P} a_P [P], \xi\right)$ is defined to be
\[
\adeg(\overline{D}) := \sum_{P} a_P \log \#(O_K/P) + \frac{1}{2}\sum_{\sigma \in K(\CC)} \xi(\sigma).
\]
%where $P$ runs over the set of all maximal ideals of $O_K$. % and $D = \sum_{P} a_P [P]$.
Note that
\[
\adeg\left(\widehat{(x)}_{\RR}\right) = 0\quad (x \in K^{\times} \otimes \RR)
\]
by virtue of the product formula.
Further, $\overline{D} = \left(\sum_{P} a_P [P], \xi\right)$ is said to be effective if $a_P \geq 0$ for all $P$ and $\xi(\sigma) \geq 0$ for all $\sigma$.

In \cite[SubSection~3.4]{MoD}, we proved the following:
\renewcommand{\theequation}{\arabic{section}.\arabic{Theorem}}%
\addtocounter{Theorem}{1}
\begin{equation}
\label{intro:Dirichlet:prop}
\text{``If $\adeg(\overline{D}) \geq 0$, then $\overline{D} + \widehat{(x)}_{\RR}$ is effective for some $x \in K^{\times} \otimes \RR$.''}
\end{equation}
\renewcommand{\theequation}{\arabic{section}.\arabic{Theorem}.\arabic{Claim}}%
This implies
the essential part of the Dirichlet unit theorem.
Indeed, we set $\overline{D}=(0,\xi)$ for $\xi \in \Xi_K^0$.
As $\adeg(\overline{D}) = 0$, by  the assertion \eqref{intro:Dirichlet:prop},
$\overline{D} + \widehat{(y)}_{\RR}$ is effective for some
$y \in K^{\times} \otimes \RR$, %because $\adeg(\overline{\DDD}) = 0$, 
and hence
$\overline{D} + \widehat{(y)}_{\RR} = (0,0)$ because $\adeg(\overline{D}+ \widehat{(y)}_{\RR}) = 0$.
Here we set $y = u_1^{a_1} \cdots u_r^{a_r}$ such that
$u_1, \ldots, u_r \in K^{\times}$, $a_1, \ldots, a_r \in \RR$ and
$a_1, \ldots, a_r$ are linearly independent over $\QQ$.
By using the linear independency of $a_1, \ldots, a_r$ over $\QQ$, $\ord_P(y) = 0$ implies 
$\ord_P(u_i) = 0$ for all  maximal ideals $P$ of $O_K$ and $i=1, \ldots, r$, that is,
$u_i \in O_K^{\times}$ for $i=1, \ldots, r$.
%, that is, $y \in O_K^{\times} \otimes \RR$.
Therefore, $\xi = L_{\RR}(y^2)$ and $y \in O_K^{\times} \otimes \RR$, as required.
In this sense, the above property
\eqref{intro:Dirichlet:prop} is an Arakelov theoretic interpretation of the classical Dirichlet unit theorem.

In \cite{MoD} and \cite{MoAdel},  we considered 
a  higher dimensional analogue of \eqref{intro:Dirichlet:prop}.
In the higher dimensional case, the condition ``$\adeg(\overline{D}) \geq 0$'' should be replaced by
the pseudo-effectivity of $\overline{D}$. Of course, this analogue is not true in general (cf. \cite{ChMoDsys}).
It is however a very interesting problem to find a sufficient condition for the existence of an arithmetic small $\RR$-section, that is,
an element $x$ such that
\[
x = x_1^{a_1} \cdots x_r^{a_r}\quad (\text{$x_1, \ldots, x_r$ are rational functions and $a_1, \ldots, a_r \in \RR$})
\]
and $\overline{D} + \widehat{(x)}_{\RR}$ is effective.
For example, in \cite{MoD} and \cite{MoAdel}, 
we proved that if $D$ is numerically trivial
and $\overline{D}$ is pseudo-effective,
then $\overline{D}$ has an arithmetic small $\RR$-section.
In this paper, we would like to consider a geometric analogue of the Dirichlet unit theorem in the above sense.

\begin{comment}
In \cite{MoD} and \cite{MoAdel},  we consider 
we proposed the following question:
\renewcommand{\theequation}{\arabic{section}.\arabic{Theorem}}%
\addtocounter{Theorem}{1}
\begin{equation}
\label{intro:Dirichlet:question}
\text{``If $\overline{\DDD}$ is pseudo-effective, then is $\overline{\DDD}$ arithmetically $\RR$-effective?''}.
\end{equation}
\renewcommand{\theequation}{\arabic{section}.\arabic{Theorem}.\arabic{Claim}}%
Of course, it is not true in general (cf. \cite{ChMoDsys}).
It is however a very interesting problem to find a sufficient condition for the arithmetic $\RR$-effectivity of $\overline{\DDD}$.
For example, in \cite{MoD} and \cite{MoAdel}, 
we proved that if $\DDD$ is numerically trivial%
%%%
\query{effective $\Longrightarrow$ trivial (09/July/2014)}%
%%%
on $\XXX_{\QQ}$ and $\overline{\DDD}$ is pseudo-effective,
then $\overline{\DDD}$ is arithmetically $\RR$-effective.
In this paper, we would like to consider a geometric analogue of the question \eqref{intro:Dirichlet:question}.
\end{comment}

\bigskip
Let $X$ be a normal projective variety over an algebraically closed field $k$.
Let $\Div(X)$ denote the group of Cartier divisors on $X$.
Let $\KK$ be either the field $\QQ$ of rational numbers  or the field $\RR$ of real numbers. 
We define $\Div(X)_{\KK}$ to be $\Div(X)_{\KK} := \Div(X) \otimes_{\ZZ} \KK$, whose element is
called a {\em $\KK$-Cartier divisor} on $X$.
For $\KK$-Cartier divisors $D_1$ and $D_2$,
we say that $D_1$ is {\em $\KK$-linearly equivalent} to $D_2$, which is denoted by $D_1 \sim_{\KK} D_2$,
if there are non-zero rational functions $\phi_1, \ldots, \phi_r$ on $X$ and $a_1, \ldots, a_r \in \KK$
such that 
\[
D_1 - D_2 = a_1(\phi_1) + \cdots + a_r (\phi_r).
\]
Let $D$ be a $\KK$-Cartier divisor on $X$.
We say that $D$ is {\em big} if there is an ample $\QQ$-Cartier
divisor $A$ on $X$ such that $D - A$ is $\KK$-linearly
equivalent to an effective $\KK$-Cartier divisor. Further,
$D$ is said to be {\em pseudo-effective} if $D + B$ is big for
any big $\KK$-Cartier divisor $B$ on $X$.
Note that if $D$ is {\em $\KK$-effective}
(i.e.  $D$ is $\KK$-linearly equivalent to an effective $\KK$-Cartier divisor),
then $D$ is pseudo-effective.
The converse of the above statement holds on toric varieties (for example, \cite[Proposition~4.9]{BMPS}).
However, it is not true in general.
In the case where $k$ is uncountable  (for example, $k = \CC$), several examples are known such as
non-torsion numerically trivial invertible sheaves and 
Mumford's example on a minimal ruled surface (cf. \cite[Chapter~1, Example~10.6]{HartsAmple} and
\cite{MB}).
Nevertheless, we would like to propose the following question:

\begin{Question}[$\KK$-version]
\label{question:pseudo:effective}
We assume that $k$ is an algebraic closure of a finite field. 
If a $\KK$-Cartier divisor $D$ on $X$ is
pseudo-effective,
%under what conditions
does it follow that $D$ is $\KK$-effective?
\end{Question}

This question is a geometric analogue of  the fundamental question introduced in \cite{MoD}.
In this sense, it turns out to be a geometric Dirichlet's unit theorem if it is true,
so that we often say that a $\KK$-Cartier divisor $D$ has {\em the Dirichlet property}
if $D$ is $\KK$-effective.
Note that the $\RR$-version implies
the $\QQ$-version (cf. Proposition~\ref{prop:R:effective:Q:effective}).
%%%
Moreover, the $\RR$-version does not hold in general.
In Example~\ref{exam:R:version:not:true}, we give an example, so that,
for the $\RR$-version, the question should be
\[
\text{``Under what conditions does it follow that $D$ is $\KK$-effective?''.}
\]
%%%
Further, the $\QQ$-version implies the following question due to Keel (cf. \cite[Question~0.9]{Keel} and
Remark~\ref{rem:G:Dirichlet:imply:Keel}).
The similar arguments on an algebraic surface are discussed
in the recent article by Langer \cite[Conjecture~1.7$\sim$1.9 and Lemma~1.10]{Langer2}.

\begin{Question}[S. Keel]
\label{question:Keel}
We assume that $k$ is an algebraic closure of a finite field and
$X$ is an algebraic surface over $k$. 
Let $D$ be a Cartier divisor on $X$.
If $(D \cdot C) > 0$ for all irreducible curves $C$ on $X$, 
is $D$ ample?
\end{Question}

By virtue of the Zariski decomposition, Question~\ref{question:pseudo:effective} on an algebraic surface is equivalent to ask the following:
\[
\text{``If $D$ is nef, then is $D$ $\KK$-effective?''}.
\]
%we may assume that $D$ is nef.
%We can
One might expect that $D$ is semiample
(cf. \cite[Question 0.8.2]{Keel}).
However, Totaro \cite[Theorem~6.1]{To} found a Cartier divisor $D$ on an algebraic surface over a finite
field such that $D$ is nef but not semiample.
Totaro's example does not give a counter example of our question because 
we assert only  the $\QQ$-effectivity in Question~\ref{question:pseudo:effective}.
%of a nef Cartier divisor.

Inspired by the paper \cite{BS} due to Biswas and Subramanian,
we have the following partial answer to the above question.

\begin{Theorem}
\label{thm:main:pseudo:eff:eff}
We assume that $k$ is an algebraic closure of a finite field.
Let $C$ be a smooth projective curve over $k$ and let $E$ be a locally free sheaf of rank $r$ on $C$.
Let $\PP(E)$ be the projective bundle of $E$, that is,
$\PP(E) := \Proj\left(\bigoplus_{m=0}^{\infty} \Sym^m(E)\right)$.
If $D$ is a pseudo-effective $\KK$-Cartier divisor on $\PP(E)$, then $D$ is $\KK$-effective.
\end{Theorem}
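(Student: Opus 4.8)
The plan is to exploit the simple structure of the Picard group of $\PP(E)$ together with the special behaviour of vector bundles over $\overline{\mathbb{F}}_p$. Write $\pi \colon \PP(E) \to C$ for the projection, $\xi := c_1(\OO_{\PP(E)}(1))$, and $F := \pi^{*}([\mathrm{pt}])$ for the class of a fibre. Since the fibres are projective spaces, $\Pic(\PP(E)) = \ZZ\,\xi \oplus \pi^{*}\Pic(C)$, so after tensoring with $\KK$ I may write $D \sim_{\KK} a\,\xi + \pi^{*}L$ with $a \in \KK$ and $L \in \Pic(C)_{\KK}$. The class of a line $\ell$ contained in a fibre moves in a family covering $\PP(E)$, hence pairs non-negatively with every pseudo-effective class; as $\xi \cdot \ell = 1$ and $F \cdot \ell = 0$, this forces $a = D \cdot \ell \ge 0$.

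If $a = 0$ then $D \sim_{\KK} \pi^{*}L$, and pairing with a movable multisection shows $\deg L \ge 0$. Here the hypothesis on $k$ enters for the first time: over $\overline{\mathbb{F}}_p$ the group $\Pic^{0}(C)$ is a union of the finite groups $\Pic^{0}(C)(\mathbb{F}_q)$, hence torsion, so $\Pic^{0}(C) \otimes_{\ZZ} \KK = 0$. Thus a degree-$0$ class is $\KK$-linearly trivial, while a class of positive degree is ample and therefore $\KK$-effective; either way $D$ is $\KK$-effective. (This is precisely the point at which the uncountable-field counterexamples break down.)

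The main case is $a > 0$, and this is where I expect the real work to lie. First I would identify the pseudo-effective cone: using the (strong) Harder--Narasimhan filtration of $E$ and the behaviour of slopes under symmetric powers in characteristic $p$, one expects
\[
\Pef(\PP(E)) = \langle\, F,\ \xi - \mu_{\max}(E)\,F \,\rangle ,
\]
where $\mu_{\max}(E)$ is the maximal slope occurring in the strong filtration, so that pseudo-effectivity of $D$ is equivalent to $a \ge 0$ and $\deg L \ge -a\,\mu_{\max}(E)$. Granting this, write $D \sim_{\KK} a\big(\xi - \mu_{\max}(E)F\big) + \pi^{*}\big(L + a\,\mu_{\max}(E)[\mathrm{pt}]\big)$; the second summand has non-negative degree and is $\KK$-effective by the argument of the previous paragraph, so everything reduces to proving that the single extremal class $\xi - \mu_{\max}(E)F$ is $\KK$-effective.

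This extremal class is governed by the maximal strongly semistable subsheaf $G \subseteq E$ of slope $\mu_{\max}(E)$, and to handle it I would invoke the positive-characteristic input behind \cite{BS}: over $\overline{\mathbb{F}}_p$ a strongly semistable bundle becomes projectively trivial after a finite surjective base change, i.e.\ there is a finite $f \colon C' \to C$ and a line bundle $P$ on $C'$ with $f^{*}G \cong P^{\oplus \rank G}$ (Lange--Stuhler). Base-changing gives a finite morphism $\tilde f \colon \PP(f^{*}E) \to \PP(E)$ with $\tilde f^{*}(\xi - \mu_{\max}(E)F) = \xi' - \mu_{\max}(E)F'$; the inclusion $\Sym^{N}(f^{*}G) \hookrightarrow \Sym^{N}(f^{*}E)$ together with the fact that a suitable normalization of $\Sym^{N}(f^{*}G)$ is a direct sum of degree-$0$ line bundles (again $\KK$-trivial, since $\Pic^{0}(C')$ is torsion) produces a section, hence effectivity of $\tilde f^{*}\big(N(\xi - \mu_{\max}(E)F)\big)$ upstairs. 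Finally, applying $\tilde f_{*}$ (which preserves effectivity and satisfies $\tilde f_{*}\tilde f^{*} = \deg(\tilde f)\cdot\mathrm{id}$) descends this to $\KK$-effectivity of $\xi - \mu_{\max}(E)F$ on $\PP(E)$. The main obstacle, I anticipate, is the characteristic-$p$ bookkeeping that underlies this last step: establishing the precise pseudo-effective cone above (the subtle interplay between ordinary and strong Harder--Narasimhan slopes and the growth of $\mu_{\max}(\Sym^{N}E)$) and making the trivialization-after-finite-cover input interact cleanly with descent of effectivity; by contrast, the cases $a \le 0$ and the reduction of the general divisor to the extremal ray are essentially formal once the cone is known.
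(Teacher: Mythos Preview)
Your proposal is correct and follows essentially the same strategy as the paper: decompose $D$ as $a\xi+\pi^*L$, dispose of $a=0$ via the torsion property of $\Pic^0(C)$ over $\overline{\mathbb F}_p$, identify the pseudo-effective cone via the strong Harder--Narasimhan filtration (the paper's Proposition~\ref{prop:criterion:nef:pseudo}), reduce to the extremal ray, and then use a finite cover plus Lange--Stuhler/(F\ref{well:fact:05}) together with descent of effectivity under pushforward (the paper's Lemma~\ref{lem:eff:finite:covering}).

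The one structural difference is in how far you push the trivialization step. The paper invokes the full Biswas--Parameswaran splitting theorem (Theorem~\ref{thm:splitting:locally:free:sheaf}) to reduce at the outset to $E\simeq L_1\oplus\cdots\oplus L_r$, after which the extremal class $\Theta_E-f_E^*(\xi)$ visibly has a section. You instead trivialize only the maximal strongly semistable piece $G$ and feed $\Sym^N(f^*G)\hookrightarrow\Sym^N(f^*E)$ into the same conclusion. Your route is slightly more economical (it uses (F\ref{well:fact:05}) only once, for a single semistable bundle), while the paper's full splitting makes the endgame a one-line computation; the underlying inputs are identical. One small slip: $\tilde f^*F=\deg(f)\,F'$ if $F'$ denotes a single fibre upstairs, so your pullback formula should read $\xi'-\deg(f)\,\mu_{\max}(E)\,F'$, but this does not affect the argument.
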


In addition to the above result,
we can also give an affirmative answer for the $\QQ$-version of Question~\ref{question:pseudo:effective}
on abelian varieties.

\begin{Proposition}
\label{prop:pseudo:abelian:variety}
We assume that $k$ is an algebraic closure of a finite field.
Let $A$ be an abelian variety over $k$.
If $D$ is a pseudo-effective $\QQ$-Cartier divisor on $A$, then $D$ is $\QQ$-effective.
\end{Proposition}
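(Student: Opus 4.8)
The plan is to reduce the entire statement to a numerical question on $\NS(A)_{\QQ}$, exploiting that over $k$ the group $\Pic^0(A)$ has no nontrivial divisible part. First I would note that $\Pic^0(A) = \widehat{A}(k)$ is a torsion group, because every $k$-point of an abelian variety is defined over a finite subfield of $k$ and hence has finite order in the (finite) group of points over that field. Therefore $\Pic^0(A) \otimes_{\ZZ} \QQ = 0$, so the natural map $\Pic(A)_{\QQ} \to \NS(A)_{\QQ}$ is an isomorphism; equivalently, for $\QQ$-Cartier divisors on $A$, $\QQ$-linear equivalence and numerical equivalence coincide. In particular $D$ is $\QQ$-effective if and only if its numerical class is represented by an effective $\QQ$-Cartier divisor, and pseudo-effectivity depends only on the numerical class. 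Thus it suffices to prove that a pseudo-effective class in $\NS(A)_{\QQ}$ is numerically equivalent to an effective $\QQ$-divisor.

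Second, I would show that a pseudo-effective $\QQ$-Cartier divisor $D$ on $A$ is automatically nef, using the homogeneity of $A$. Suppose $(D \cdot C) < 0$ for some irreducible curve $C$. The translates $\{C + a\}_{a \in A}$ form an algebraic family covering $A$ (any $x \in A$ lies on $C + (x-c)$ for $c \in C$), and since translation acts trivially on numerical classes we have $(D \cdot (C+a)) = (D \cdot C)$ and $(H \cdot (C+a)) = (H \cdot C)$ for every divisor $H$. Now fix an ample $H$; for each $\epsilon > 0$ pseudo-effectivity provides an effective $\QQ$-divisor $E \sim_{\QQ} D + \epsilon H$, and choosing $x \notin \Supp(E)$ together with the covering property yields a translate $C + a \ni x$ not contained in $\Supp(E)$, whence $0 \le (E \cdot (C+a)) = (D \cdot C) + \epsilon (H \cdot C)$. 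Letting $\epsilon \to 0$ forces $(D \cdot C) \ge 0$, a contradiction. Hence $D$ is nef.

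Third, I would pass from nef to effective through the structure theory of line bundles on abelian varieties. Replacing $D$ by a positive integer multiple, assume $D$ is an integral nef Cartier divisor with line bundle $L$, and let $\phi_L : A \to \widehat{A}$ be the associated homomorphism. Set $K := (\ker \phi_L)^0_{\mathrm{red}}$, an abelian subvariety, and let $\pi : A \to B := A/K$ be the quotient. Then $L$ is numerically equivalent to $\pi^* M$ for a nondegenerate line bundle $M$ on $B$, and since $L$ is nef, $M$ is a nondegenerate nef line bundle and hence ample (it has index zero). As $M$ is ample it is $\QQ$-effective, so $\pi^* M$ is $\QQ$-effective; therefore $D$ is numerically equivalent to a $\QQ$-effective divisor, and by the first step $D$ itself is $\QQ$-effective.

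I expect the main obstacle to be the third step, namely the reduction of a nef class to the pullback of an ample class, which rests on the descent statement $L \equiv \pi^* M$ and on the fact that a nondegenerate nef line bundle on an abelian variety is ample. The first step is where the hypothesis on $k$ enters decisively: it is precisely the torsion of $\Pic^0(A)$ that collapses $\QQ$-linear equivalence onto numerical equivalence, and this is exactly what fails over $\CC$, accounting for the non-torsion numerically trivial counterexamples mentioned in the introduction.
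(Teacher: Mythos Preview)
Your proposal is correct and follows essentially the same three-step strategy as the paper: identify $\QQ$-linear with numerical equivalence over $\overline{\mathbb{F}}_p$ (the paper's Lemma~\ref{lem:num:zero:div}), observe that pseudo-effective implies nef on an abelian variety (which the paper states without proof, while you supply the standard covering-by-translates argument), and descend a nef line bundle to a quotient by the connected component of $K(L)$ (the paper's Proposition~\ref{prop:nef:num:Q:effective}). The only packaging difference is in the third step: the paper works by induction on $\dim A$, explicitly showing that $L^{\otimes 2}$ is trivial along the fibers of $\pi:A\to A/B$ and hence is literally a pullback, whereas you invoke in one stroke the structural facts that $L$ is numerically a pullback $\pi^*M$ with $M$ nondegenerate and that a nondegenerate nef line bundle on an abelian variety is ample; both reach the same conclusion, yours more quickly at the cost of quoting more background, the paper's more self-contained.
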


Finally I would like to thank Prof. Biswas, Prof. Keel, Prof. Langer,
Prof. Tanaka and  Prof. Totaro for
their helpful comments.
Especially I would like to express my hearty thanks to Prof. Yuan for his nice example.
I also would like to thank the referee for the suggestions.

\section{Preliminaries}
Let $k$ be an algebraic closed field. 
Let $C$ be a smooth projective curve over $k$ and
let $E$ be a locally free sheaf of rank $r$ on $C$.
The projective bundle $\PP(E)$ of $E$ is given by
\[
\PP(E) := \Proj\left(\bigoplus_{m=0}^{\infty} \Sym^m(E)\right).
\]
The canonical morphism $\PP(E) \to C$ is denoted by $f_E$.
A tautological divisor $\Theta_E$ on $\PP(E)$ is a Cartier divisor on $\PP(E)$
such that $\OO_{\PP(E)}(\Theta_E)$ is isomorphic to the tautological invertible
sheaf $\OO_{\PP(E)}(1)$ on $\PP(E)$.
We say that $E$ is {\em strongly semistable} if,
for any surjective morphism $\pi : C' \to C$ of smooth projective curves,
$\pi^*(E)$ is semistable.
By definition, if $E$ is strongly semistable  and $\pi : C' \to C$ is
a surjective morphism of smooth projective curves over $k$, then
$\pi^*(E)$ is also strongly semistable.
A filtration
\[
0 = E_0 \subsetneq E_1 \subsetneq E_2 \subsetneq \cdots \subsetneq E_{s-1} \subsetneq E_{s} = E
\]
of $E$ is called the {\em strong Harder-Narasimham filtration} if
\[
\mu(E_1/E_0) > \mu(E_2/E_1) > \cdots > \mu(E_{s-1}/E_{s-2}) > \mu(E_{s}/E_{s-1})
\]
and
$E_{i}/E_{i-1}$ is a strongly semistable locally free sheaf on $C$ for each $i=1, \ldots, s$.
Recall the following well-known facts 
(F\ref{well:fact:01})--(F\ref{well:fact:05}) on strong semistability.
\begin{enumerate}
\renewcommand{\labelenumi}{(F\arabic{enumi})}
\item \label{well:fact:01}
A locally free sheaf $E$ on $C$ is strong semistable if and only if $\Theta_E - f_E^*(\xi_E/r)$ is nef,
where $\xi_E$ is a Cartier divisor on $C$ with $\OO_C(\xi_E) \simeq \det(E)$
(for example, see \cite[Proposition~7.1, (3)]{MoRBogo}). 

\item \label{well:fact:02}
Let $\pi : C' \to C$ be a surjective morphism of smooth projective curves over $k$ such that
the function field of $C'$ is a separable extension field over the function field of $C$. If $E$ is semistable, then $\pi^*(E)$
is also semistable 
(for example, see \cite[Proposition~7.1, (1)]{MoRBogo}).
In particular,
if $\operatorname{char}(k) = 0$, then
$E$ is strongly semistable if and only if
$E$ is semistable.
Moreover, in the case where $\operatorname{char}(k) > 0$, $E$ is strongly semistable if and only if
$(F^m)^*(E)$ is semistable for all $m \geq 0$,
where $F : C \to C$ is the absolute Frobenius map and
\[
F^m = \overbrace{F \circ \cdots \circ F}^{m}.
\]

\item \label{well:fact:03}
If $E$ and $G$ are strongly semistable locally free sheaves on $C$, then
$\Sym^m(E)$ and $E \otimes G$ are also strongly semistable
for all $m \geq 1$ (for example, see \cite[Theorem~7.2 and Corollary~7.3]{MoRBogo}).

\item \label{well:fact:04}
There is a surjective morphism $\pi : C' \to C$ of smooth projective curves over $k$ such that
$\pi^*(E)$ has the strong Harder-Narasimham filtration
(cf. \cite[Theorem~7.2]{Langer}).

\item \label{well:fact:05}
We assume that $k$ is an algebraic closure of a finite field.
If $E$ is a strongly semistable locally free sheaf 
on $C$ with $\det(E) \simeq \OO_C$,
then there is a surjective morphism $\pi : C' \to C$ of smooth projective curves over $k$
such that $\pi^*(E) \simeq \OO_{C'}^{\oplus \rank E}$ 
(cf. \cite[p. 557]{BH}, \cite[Theorem~3.2]{Su} and \cite{BS}).
\end{enumerate}

\bigskip
The purpose of this section is to prove the following characterizations of
pseudo-effective $\RR$-Cartier divisors and nef $\RR$-Cartier divisors on $\PP(E)$.
This is essentially due to Nakayama \cite[Lemma~3.7]{Nakayama} in which he works over
the complex number field.

\begin{Proposition}
\label{prop:criterion:nef:pseudo}
We assume that $E$ has the strong Harder-Narasimham filtration:
\[
0 = E_0 \subsetneq E_1 \subsetneq E_2 \subsetneq \cdots \subsetneq E_{s-1} \subsetneq E_{s} = E.
\]
Then, for an $\RR$-divisor $A$ on $C$, we have the following:

\begin{enumerate}
\renewcommand{\labelenumi}{(\arabic{enumi})}
\item
$\Theta_E - f^*(A)$ is pseudo-effective if and only if $\deg(A) \leq \mu(E_1)$.

\item
$\Theta_E - f^*(A)$ is nef if and only if $\deg(A) \leq \mu(E/E_{s-1})$.
\end{enumerate}
\end{Proposition}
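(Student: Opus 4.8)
The plan is to pass to the numerical space and reduce everything to two threshold computations. Since $C$ is a curve, $\Num(\PP(E))_{\RR}$ is spanned by the class $\theta$ of $\Theta_E$ and the class $\phi$ of a fibre $f^*(\mathrm{pt})$, and Grothendieck's relation gives the intersection numbers $\theta^{r-1}\cdot\phi = 1$, $\phi^2 = 0$, $\theta^{r} = \deg(E)$. The class of $\Theta_E - f^*(A)$ is $\theta - d\phi$ with $d := \deg(A)$, so both ``pseudo-effective'' and ``nef'' depend only on $d$. Because $\phi$ is nef and effective, $\theta - d'\phi = (\theta - d\phi) + (d-d')\phi$ is nef (resp.\ pseudo-effective) whenever $\theta - d\phi$ is and $d' \le d$; thus it suffices to locate the two thresholds, and I claim they are $\mu_{\min} := \mu(E/E_{s-1})$ for nef and $\mu_{\max} := \mu(E_1)$ for pseudo-effective.

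For the nef statement I would argue curve-by-curve. Let $\Gamma$ be an integral curve in $\PP(E)$. If $\Gamma$ lies in a fibre, then $f^*(A)\cdot\Gamma = 0$ and $\Theta_E\cdot\Gamma \ge 0$ since $\Theta_E$ is relatively ample, so there is nothing to prove. Otherwise $\Gamma$ dominates $C$; I normalise to get a finite morphism $g : \tilde\Gamma \to C$ of degree $e$, and the lift of $\Gamma$ is a section of $\PP(g^*E) \to \tilde\Gamma$, i.e.\ a rank-one quotient $g^*E \twoheadrightarrow \mathcal M$ with $\Theta_E\cdot\Gamma = \deg\mathcal M$. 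Since any quotient line bundle of a vector bundle $V$ has degree at least $\mu_{\min}(V)$, and the strong Harder--Narasimhan filtration of $E$ pulls back to one of $g^*E$ (strong semistability is preserved by pullback and the slopes are multiplied by $e$, keeping the strict inequalities), I get $\deg\mathcal M \ge \mu_{\min}(g^*E) = e\,\mu_{\min}$, whence $(\Theta_E - f^*(A))\cdot\Gamma = \deg\mathcal M - e\,d \ge e(\mu_{\min}-d) \ge 0$ when $d \le \mu_{\min}$. For the necessity, I would restrict to the closed subvariety $\PP(E/E_{s-1}) \hookrightarrow \PP(E)$ coming from the surjection $E \twoheadrightarrow E/E_{s-1}$, on which $\Theta_E$ restricts to $\Theta_{E/E_{s-1}}$; nef restricts to nef, and computing the top self-intersection of $\Theta_{E/E_{s-1}} - f^*(A)$ on this $(\rank(E/E_{s-1}))$-dimensional variety gives $(\rank(E/E_{s-1}))(\mu_{\min}-d) \ge 0$, forcing $d \le \mu_{\min}$.

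For the pseudo-effective statement I would use sections of symmetric powers, via $f_*\OO_{\PP(E)}(m) = \Sym^m(E)$. For sufficiency, fix rational $d < \mu_{\max}$ and an effective $\QQ$-divisor $A$ of degree $d$; the subsheaf $\Sym^m(E_1) \hookrightarrow \Sym^m(E)$ is strongly semistable of slope $m\mu_{\max}$ by (F\ref{well:fact:03}), so Riemann--Roch gives $\chi(\Sym^m(E_1)\otimes\OO_C(-mA)) = N_m\bigl(m(\mu_{\max}-d)+1-g\bigr) > 0$ for $m \gg 0$, where $N_m = \rank\Sym^m(E_1)$. Hence $H^0(C,\Sym^m(E)\otimes\OO_C(-mA)) \ne 0$, so $\theta - d\phi$ is effective; letting $d \uparrow \mu_{\max}$ and using closedness of the pseudo-effective cone together with monotonicity covers all $d \le \mu_{\max}$. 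For necessity, if $\theta - d\phi$ is pseudo-effective then $\theta - d'\phi$ is big for a sequence $d' \uparrow d$; a nonzero section of $\Sym^m(E)\otimes\OO_C(-mA')$ produces a sub-line-bundle of $\Sym^m(E)$ of degree $md'$, so $md' \le \mu_{\max}(\Sym^m(E)) = m\mu_{\max}$, and letting $d' \to d$ gives $d \le \mu_{\max}$.

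The hard part is not any single estimate but the systematic control of slopes under symmetric powers, tensor products, and finite pullbacks: the two identities $\mu_{\min}(g^*E) = e\,\mu_{\min}(E)$ and $\mu_{\max}(\Sym^m E) = m\,\mu_{\max}(E)$ are exactly where \emph{strong} semistability is indispensable, since in positive characteristic ordinary semistability survives neither $\Sym^m$ nor a general pullback. Granting (F\ref{well:fact:02}) and (F\ref{well:fact:03}), the subtlety is bookkeeping: one must check that the strong Harder--Narasimhan filtration remains a Harder--Narasimhan filtration after these operations (the strict slope inequalities are preserved), so that $\mu_{\min}$ of quotients and $\mu_{\max}$ of subsheaves transform as claimed. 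I note that this argument uses no finiteness of $k$, consistent with the proposition being stated over an arbitrary algebraically closed field.
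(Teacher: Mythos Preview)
Your argument is correct and follows the same four-part scheme as the paper: sufficiency and necessity for each of (1) and (2). The nef half is essentially identical to the paper's parts (c) and (d), including the restriction to $\PP(E/E_{s-1})$ and the section/quotient-line-bundle computation after normalisation.

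For the pseudo-effective half there are minor but genuine differences in execution. For sufficiency the paper appeals to (F1) to see that $\Theta_{E_1}-f_{E_1}^*(A)$ is nef and big on $\PP(E_1)$, hence has sections; your direct Riemann--Roch count on $\Sym^m(E_1)\otimes\OO_C(-mA)$ reaches the same conclusion and in fact does not even need the strong semistability of $\Sym^m(E_1)$, since $\chi>0$ already forces $h^0>0$. For necessity the paper isolates a separate vanishing result (Lemma~\ref{lem:vansihing:sym}), proved by induction on the filtration length, showing $H^0(C,\Sym^m(E)\otimes G)=0$ whenever the twisted graded pieces have negative degree; your route via the bound on sub-line-bundles, equivalently $\mu_{\max}(\Sym^m E)=m\,\mu_{\max}(E)$, is a repackaging of the same inductive filtration argument using (F\ref{well:fact:03}). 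The one step you left implicit---that pseudo-effectivity of $\theta-d\phi$ forces $\theta-d'\phi$ to be big for rational $d'\uparrow d$---is obtained exactly as in the paper by adding a small ample multiple $\epsilon(\theta+c\phi)$ and rescaling.
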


Let us begin with the following lemma.

\begin{Lemma}
\label{lem:vansihing:sym}
We assume that 
$E$ has a filtration 
\[
0 = E_0 \subsetneq E_1 \subsetneq \cdots \subsetneq E_{s-1} \subsetneq E_s = E
\]
such that
$E_i/E_{i-1}$ is a strongly semistable locally free sheaf on $C$ and
$\deg(E_i/E_{i-1}) < 0$ for all $i=1, \ldots, s$.
Then,
$H^0(C, \Sym^m(E) \otimes G) = 0$ for $m \geq 1$ and
a strongly semistable locally free sheaf $G$ on $C$ with $\deg(G) \leq 0$.
\end{Lemma}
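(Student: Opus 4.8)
The plan is to reduce the vanishing to a slope argument and then produce a filtration of $\Sym^m(E) \otimes G$ whose graded pieces all have strictly negative slope.

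First I would record the elementary fact that a semistable locally free sheaf $V$ on $C$ with $\mu(V) < 0$ satisfies $H^0(C, V) = 0$: a nonzero section is a nonzero map $\OO_C \to V$ whose image is a rank-one subsheaf of nonnegative degree, contradicting semistability since $\mu(V) < 0$. More generally, if $V$ admits a filtration $0 = V_0 \subsetneq V_1 \subsetneq \cdots \subsetneq V_t = V$ with each $V_j/V_{j-1}$ semistable of negative slope, then $H^0(C, V) = 0$; this follows by induction on $t$ using the exact sequence
\[
0 \to H^0(C, V_{t-1}) \to H^0(C, V) \to H^0(C, V_t/V_{t-1}).
\]
Thus it suffices to exhibit such a filtration on $\Sym^m(E) \otimes G$.

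Next I would build the filtration from the given filtration of $E$. Since forming symmetric powers is compatible with filtrations, $\Sym^m(E)$ carries a natural filtration whose associated graded sheaf is
\[
\bigoplus_{m_1 + \cdots + m_s = m} \Sym^{m_1}(E_1/E_0) \otimes \cdots \otimes \Sym^{m_s}(E_s/E_{s-1}),
\]
the sum being over nonnegative integers $m_1, \ldots, m_s$ with $m_1 + \cdots + m_s = m$. Tensoring this filtration with $G$, the successive quotients of $\Sym^m(E) \otimes G$ are of the form
\[
\left( \Sym^{m_1}(E_1/E_0) \otimes \cdots \otimes \Sym^{m_s}(E_s/E_{s-1}) \right) \otimes G.
\]
I would then check that each such quotient is semistable of negative slope. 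Each factor $\Sym^{m_i}(E_i/E_{i-1})$ is strongly semistable by (F\ref{well:fact:03}) because $E_i/E_{i-1}$ is, and the tensor product of finitely many strongly semistable sheaves together with $G$ is again strongly semistable, hence semistable, by (F\ref{well:fact:03}). Its slope is $\sum_{i=1}^s m_i \mu(E_i/E_{i-1}) + \mu(G)$, where $\mu(G) = \deg(G)/\rank(G) \leq 0$. Since $m = \sum_i m_i \geq 1$, at least one $m_i$ is positive; as $\mu(E_i/E_{i-1}) < 0$ for every $i$, this slope is strictly negative. Applying the first paragraph gives $H^0(C, \Sym^m(E) \otimes G) = 0$.

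The step I expect to be the main obstacle is ensuring that the graded pieces remain semistable: in positive characteristic semistability is not preserved under symmetric powers or tensor products, so the slope argument would collapse without the hypothesis of strong semistability. Fact (F\ref{well:fact:03}) is precisely what rescues the argument, and keeping track of the compatibility between the filtration of $E$ and the induced filtration of $\Sym^m(E)$ is the other point requiring care.
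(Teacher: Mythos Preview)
Your proof is correct and follows essentially the same approach as the paper. The only organizational difference is that the paper proceeds by induction on $s$, peeling off $E/E_{s-1}$ via the short exact sequence and applying the inductive hypothesis to $E_{s-1}$ with the twisted sheaf $\Sym^{m-j}(E/E_{s-1}) \otimes G$, whereas you unwind this induction at once by writing down the full multigraded filtration of $\Sym^m(E)$; both routes rely on (F\ref{well:fact:03}) in the same way to get strong semistability of the graded pieces and then conclude from the negative slope.
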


\begin{proof}
We prove it by induction on $s$.
In the case where $s = 1$, $E$ is strongly semistable and $\deg(E) < 0$, so that
$\Sym^m(E) \otimes G$ is also strongly semistable by (F\ref{well:fact:03}) and 
\[
\deg(\Sym^m(E) \otimes G) < 0.
\]
Therefore, $H^0(C, \Sym^m(E) \otimes G) = 0$.

Here we assume that $s > 1$. Let us consider an exact sequence
\[
 0 \to E_{s-1} \to E \to E/E_{s-1} \to 0.
\]
By \cite[Chapter~II, Exercise~5.16, (c)]{Harts}, there is a filtration 
\[
\Sym^m(E) = F^0 \supsetneq F^1 \supsetneq \cdots \supsetneq F^{m} \supsetneq F^{m+1} = 0 
\]
such that 
\[
F^j/F^{j+1} \simeq \Sym^{j}(E_{s-1}) \otimes \Sym^{m-j}(E/E_{s-1})
\]
for each $j = 0, \ldots, m$. 
By using the hypothesis of induction, 
\[
H^0(C, (F^{j}/F^{j+1}) \otimes G) = 0
\]
for $j=1, \ldots, m$
because $\Sym^{m-j}(E/E_{s-1}) \otimes G$ is strongly semistable by (F\ref{well:fact:03}) and 
\[
\deg(\Sym^{m-j}(E/E_{s-1}) \otimes G) \leq 0.
\]
Moreover, since $\Sym^{m}(E/E_{s-1}) \otimes G$ is strongly semistable by (F\ref{well:fact:03}) and 
\[
\deg(\Sym^{m}(E/E_{s-1}) \otimes G) < 0,
\]
we have
\[
H^0(C, (F^{0}/F^{1}) \otimes G) = H^0(C, \Sym^{m}(E/E_{s-1}) \otimes G) = 0.
\]
Therefore, by using an exact sequence 
\[
0 \to F^{j+1} \otimes G \to F^{j} \otimes G \to (F^{j}/F^{j+1}) \otimes G \to 0,
\]
we have
\[
H^0(C, F^{j+1} \otimes G) \overset{\sim}{\longrightarrow} H^0(C, F^{j} \otimes G)
\]
for $j=0, \ldots, m$,
which implies that 
$H^0(C, \Sym^m(E) \otimes G) = 0$,
as required.
\end{proof}

\begin{proof}[Proof of Proposition~\ref{prop:criterion:nef:pseudo}]
It is sufficient to show the following:

\begin{enumerate}
\renewcommand{\labelenumi}{(\alph{enumi})}
\item
If $A$ is a $\QQ$-Cartier divisor and $\deg(A) < \mu(E_1)$, then $\Theta_E - f^*(A)$ is $\QQ$-effective.

\item
If $A$ is a $\QQ$-Cartier divisor and $\deg(A) > \mu(E_1)$, then $\Theta_E - f^*(A)$ is not pseudo-effective.

\item
If $\Theta_E - f^*(A)$ is nef, then $\deg(A) \leq \mu(E/E_{s-1})$.

\item
If $\Theta_E - f^*(A)$ is not nef, then $\deg(A) > \mu(E/E_{s-1})$.
\end{enumerate}

\medskip
(a) 
%As $E_1$ is strongly semistable, by (F\ref{well:fact:01}),
%\[
%\Theta_{E_1} - \frac{\mu(E_1)}{\deg(A)}f_{E_1}^*(A)
%\]
%is nef, so that $\Theta_{E_1} - f_{E_1}^*(A)$ is nef and big.
Let $\theta$ be a divisor on $C$ with $\deg(\theta) = 1$.
As $E_1$ is strongly semistable, by (F\ref{well:fact:01}),
$\Theta_{E_1} - \mu(E_1)f_{E_1}^*(\theta)$ is nef, so that
we can see that $\Theta_{E_1} - f_{E_1}^*(A)$ is nef and big because
\[
\Theta_{E_1} - \deg(A)f_{E_1}^*(\theta) = \Theta_{E_1} - \mu(E_1) f_{E_1}^*(\theta) + (\mu(E_1) - \deg(A))f_{E_1}^*(\theta).
\]
Therefore, there is a positive integer $m_1$ such that
$m_1 A$ is a divisor on $C$ and 
\[
H^0\left(\PP(E_1), \OO_{\PP(E_1)}(m_1\Theta_{E_1} - f_{E_1}^*(m_1 A))\right) \not= 0.
\]
In addition,
\begin{align*}
H^0\left(\PP(E_1), \OO_{\PP(E_1)}(m_1\Theta_{E_1} - f_{E_1}^*(m_1A))\right) & = H^0(C, \Sym^{m_1}(E_1) \otimes \OO_{C}(-m_1 A)) \\
& \subseteq H^0(C, \Sym^{m_1}(E) \otimes \OO_{C}(-m_1 A)) \\
& = H^0\left(\PP(E), \OO_{\PP(E)}(m_1\Theta_{E} - f_E^*(m_1 A))\right),
\end{align*}
so that $\Theta_{E} - f_E^*(A)$ is $\QQ$-effective.

\medskip
(b) 
Let $B$ be an ample $\QQ$-divisor on $C$ with $\deg(B) < \deg(A) - \mu(E_1)$.
Let $\pi : C' \to C$ be a surjective morphism of smooth projective curves over $k$
such that $\pi^*(-A+B)$ is a Cartier divisor on $C'$.
Note that 
\[
\mu\left(\pi^*(E_i/E_{i-1}) \otimes \OO_{C'}(\pi^*(-A + B))\right) < 0
\]
for $i=1, \ldots, s$, and hence, by Lemma~\ref{lem:vansihing:sym},
\[
H^0\left(C', \Sym^m(\pi^*(E)) \otimes \OO_{C'}(m\pi^*(-A + B)))\right) = 0
\]
for all $m \geq 1$. In particular, if $b$ is a positive integer such that 
$b(-A + B)$ is a Cartier divisor,
then 
\[
H^0\left(C, \Sym^{mb}(E) \otimes \OO_{C}(mb(-A + B)))\right) = 0
\]
for $m \geq 1$.
Here we assume that $\Theta_E - f_E^*(A)$ is pseudo-effective.
Let $a$ be a positive integer such that $\Theta_E - f_E^*(A) + af_E^*(B)$ is ample.
Then
\[
(a-1)(\Theta_E - f_E^*(A)) + \Theta_E - f_E^*(A) + af_E^*(B) = a(\Theta_E + f_E^*(-A + B))
\]
is big, so that we can find a positive integer $m_1$ such that
\begin{multline*}
H^0\left(C, \Sym^{m_1ab}(E) \otimes \OO_C(m_1ab(-A + B))\right) \\
=
H^0\left(\PP(E), \OO_{\PP(E)}(m_1ab(\Theta_E + f_E^*(-A + B)))\right) \not= 0,
\end{multline*}
which is a contradiction.

\medskip
(c) 
Note that 
\[
\PP(E/E_{s-1}) \subseteq \PP(E),\quad
\Theta_{E/E_{s-1}} \sim \rest{\Theta_{E}}{\PP(E/E_{s-1})}\quad\text{and}\quad
f_{E/E_{s-1}} = \rest{f_E}{\PP(E/E_{s-1})},
\] 
so that
$\Theta_{E/E_{s-1}} - f_{E/E_{s-1}}^*(A)$ is nef on 
$\PP(E/E_{s-1})$. 
Let $\xi_{E/E_{s-1}}$ be a Cartier divisor on $C$ with $\OO_C(\xi_{E/E_{s-1}}) \simeq \det(E/E_{s-1})$.
If we set $e = \rank E/E_{s-1}$ and $G = \xi_{E/E_{s-1}}/e  - A$,
then 
\[
\Theta_{E/E_{s-1}} - f_{E/E_{s-1}}^*(A) = \Theta_{E/E_{s-1}} - f_{E/E_{s-1}}^*(\xi_{E/E_{s-1}}/e) + f_{E/E_{s-1}}^*(G).
\]
Since $\Theta_{E/E_{s-1}} - f_{E/E_{s-1}}^*(\xi_{E/E_{s-1}}/e)$ is nef by (F\ref{well:fact:01}) and
\[
\left( \Theta_{E/E_{s-1}} - f_{E/E_{s-1}}^*(\xi_{E/E_{s-1}}/e) \right)^e = 0,
\]
we have
\[
0 \leq \left( \Theta_{E/E_{s-1}} - f_{E/E_{s-1}}^*(A) \right)^e = e \deg(G).
\]
Therefore, $\deg(G) \geq 0$, and hence $\deg(A) \leq \mu(E/E_{s-1})$.

\medskip
(d) We can find an irreducible curve $C_0$ of $X$ such that
$( \Theta_{E} - f_E^*(A) \cdot C_0) < 0$. Clearly $C_0$ is flat over $C$.
Let $C_1$ be the normalization of $C_0$ and $h : C_1 \to C$ the induced morphism.
Let us consider the following commutative diagram:
\[
\begin{CD}
\PP(E) @<{\PP(h)}<< \PP(h^*(E)) \\
@V{f_E}VV @VV{f_{h^*(E)}}V \\
C @<{h}<< C_1
\end{CD}
\]
Note that $\PP(h)^*(\Theta_{E} - f_E^*(A)) \sim_{\RR} \Theta_{h^*(E)} - f_{h^*(E)}^*(h^*(A))$.
Further, there is a section $S$ of $f_{h^*(E)}$ such that $\PP(h)_*(S) = C_0$.
Let $Q$ be the quotient line bundle of $h^*(E)$ corresponding to the section $S$.
As
\[
0 = h^*(E_0) \subsetneq h^*(E_1) \subsetneq h^*(E_2) \subsetneq \cdots \subsetneq h^*(E_{s-1}) \subsetneq h^*(E_{s}) = h^*(E)
\]
is the Harder-Narasimham filtration of $h^*(E)$, we can easily see
\[
\deg(Q) \geq \mu(h^*(E/E_{s-1})) = \deg(h) \mu(E/E_{s-1}).
\]
On the other hand,
\begin{align*}
\deg(Q) - \deg(h)\deg(A) = 
(\Theta_{h^*(E)} - f_{h^*(E)}^*(h^*(A)) \cdot S) = (\Theta_{E} - f_E^*(A) \cdot C_0) < 0,
\end{align*}
and hence $\mu(E/E_{s-1}) < \deg(A)$.
\end{proof}

\bigskip
Finally let us consider the following three results.

\begin{Lemma}
\label{lem:eff:finite:covering}
Let $\KK$ be either $\QQ$ or $\RR$.
Let $\mu : X' \to X$ be a generically finite morphism of normal projective varieties over $k$.
For a $\KK$-Cartier divisor $D$ on $X$, $D$ is $\KK$-effective 
if and only if $\mu^*(D)$ is $\KK$-effective.
\end{Lemma}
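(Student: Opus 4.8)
The plan is to read off effectivity from Weil coefficients and transport it between $X'$ and $X$ via pullback and proper push-forward of divisors. Since $X$ and $X'$ are normal, the natural maps $\Div(X)_{\KK} \hookrightarrow \WDiv(X)_{\KK}$ and $\Div(X')_{\KK} \hookrightarrow \WDiv(X')_{\KK}$ are injective (as $\KK$ is a torsion-free $\ZZ$-module), so a $\KK$-Cartier divisor is effective exactly when its associated Weil $\KK$-divisor has non-negative coefficients. I will also use that $\mu$, being projective and dominant (hence surjective) with $\dim X' = \dim X$, gives a finite field extension $\Rat(X')/\Rat(X)$ of degree $d := \deg(\mu)$.

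For the direction ``$D$ is $\KK$-effective $\Rightarrow$ $\mu^*(D)$ is $\KK$-effective'', I would write $D - E = \sum_i a_i(\phi_i)$ with $E$ effective, $\phi_i \in \Rat(X)^{\times}$ and $a_i \in \KK$. Because $\mu$ is dominant, each $\mu^*\phi_i = \phi_i \circ \mu$ lies in $\Rat(X')^{\times}$, and the pullback of an effective $\KK$-Cartier divisor is again effective (local equations pull back to nonzero regular functions). Pulling back the relation gives $\mu^*(D) - \mu^*(E) = \sum_i a_i(\mu^*\phi_i)$, so $\mu^*(D) \sim_{\KK} \mu^*(E)$ is $\KK$-effective.

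The substantial direction is the converse. Suppose $\mu^*(D) - E' = \sum_i a_i(\psi_i)$ with $E'$ effective on $X'$, $\psi_i \in \Rat(X')^{\times}$. I would apply the proper push-forward $\mu_*$, extended $\KK$-linearly to Weil $\KK$-divisors, and invoke two standard identities: the projection formula $\mu_*(\mu^*(D)) = d\,D$, and the norm formula $\mu_*((\psi)) = (N_{\Rat(X')/\Rat(X)}(\psi))$ for $\psi \in \Rat(X')^{\times}$, valid for a proper surjective morphism between varieties of equal dimension. This yields
\[
d\,D = \mu_*(E') + \sum_i a_i\bigl(N_{\Rat(X')/\Rat(X)}(\psi_i)\bigr).
\]
The second term on the right is a principal $\KK$-divisor, hence $\KK$-Cartier; therefore $\mu_*(E') = d\,D - \sum_i a_i(N(\psi_i))$ is $\KK$-Cartier as well. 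Moreover $\mu_*(E')$ is effective, since proper push-forward sends effective cycles to effective cycles (contracted components of $E'$ simply map to $0$). Dividing by the positive integer $d$ gives $D \sim_{\KK} \tfrac{1}{d}\mu_*(E')$, which is effective, so $D$ is $\KK$-effective.

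The key technical input is the norm/push-forward identity $\mu_*((\psi)) = (N(\psi))$; once it and the projection formula are available, the computation is essentially one line. I expect the only delicate points to be bookkeeping rather than genuine obstacles: verifying the injections $\Div(\,\cdot\,)_{\KK} \hookrightarrow \WDiv(\,\cdot\,)_{\KK}$ so that ``effective'' is detected on Weil coefficients, and confirming that the generically finite (not necessarily finite) nature of $\mu$ causes no trouble, since any divisorial components of $E'$ contracted by $\mu$ vanish under $\mu_*$ without disturbing effectivity of $\mu_*(E')$.
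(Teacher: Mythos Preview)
Your proposal is correct and follows essentially the same route as the paper: push forward by $\mu_*$, use the projection formula $\mu_*\mu^*(D)=\deg(\mu)\,D$ and the norm identity $\mu_*((\psi))=(N_{\Rat(X')/\Rat(X)}(\psi))$ (the paper cites Fulton, Proposition~1.4), then divide by $\deg(\mu)$. Your extra remarks on the injection $\Div_{\KK}\hookrightarrow\WDiv_{\KK}$ and on contracted components of $E'$ are sound and only make explicit points the paper leaves implicit.
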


\begin{proof}
Clearly, if $D$ is $\KK$-effective, 
then $\mu^*(D)$ is $\KK$-effective.
Let $K$ and $K'$ be the function fields of $X$ and $X'$, respectively.
Here we assume that $\mu^*(D)$ is $\KK$-effective,
that is,
there are $\phi'_1, \ldots, \phi'_r \in {K'}^{\times}$ and $a_1, \ldots, a_r \in \KK$
such that
$\mu^*(D) + a_1(\phi'_1) + \cdots + a_r (\phi'_r)$ is effective, so that
\[
\mu_*\left( \mu^*(D) + a_1(\phi'_1) + \cdots + a_r (\phi'_r) \right) = 
\deg(\mu)D + a_1 \mu_*((\phi'_1)) + \cdots + a_r \mu_*((\phi'_r))
\]
is effective.
Note that $\mu_*(\phi'_i) = (N_{K'/K}(\phi'_i))$ (cf. \cite[Proposition~1.4]{Fulton}),
where $N_{K'/K}$ is the norm map of $K'$ over $K$, and hence
\[
D + (a_1/\deg(\mu)) (N_{K'/K}(\phi'_1)) + \cdots + (a_r/\deg(\mu)) (N_{K'/K}(\phi'_r))
\]
is effective.
Therefore, $D$ is $\KK$-effective.
\end{proof}

\begin{Lemma}
\label{lem:num:zero:div}
Let $\KK$ be either $\QQ$ or $\RR$.
We assume that $k$ is an algebraic closure of a finite field.
Let $X$ be a normal projective variety over $k$ and
$D$ a $\KK$-Cartier divisor on $X$.
If $D$ is numerically trivial, then $D$ is $\KK$-linearly equivalent to the zero divisor.
\end{Lemma}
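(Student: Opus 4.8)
The plan is to establish the statement first for $\KK = \QQ$ by a torsion argument that is special to $k = \overline{\mathbb{F}}_p$, and then to deduce the $\RR$-case from the $\QQ$-case by exploiting that numerical triviality is a $\QQ$-rational linear condition on the coefficients.

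For $\KK = \QQ$, I would write $D = \sum_{i=1}^r a_i D_i$ with $a_i \in \QQ$ and $D_i \in \Div(X)$, and clear denominators so that $mD$ is an honest Cartier divisor for some positive integer $m$, still numerically trivial. It then suffices to show that a numerically trivial element of $\Pic(X)$ is torsion, for then $NmD \sim 0$ for some positive integer $N$ and hence $D \sim_{\QQ} 0$. Since $X$ is projective and $k$ is algebraically closed, $\Pic(X)$ is the group of $k$-points of the Picard scheme $\Pic_{X/k}$; the numerically trivial classes form $\Pic^{\tau}(X)$, and $\Pic^{\tau}/\Pic^{0}$ is the finite torsion subgroup of the finitely generated group $\NS(X)$. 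Thus it is enough to see that $\Pic^{0}(X)$ is torsion. This is where the finite base field enters: $\Pic^{0}_{X/k}$ is a group scheme of finite type over $k$, and writing $k = \bigcup_{n} \mathbb{F}_{q^n}$ we get $\Pic^{0}(X) = \bigcup_{n} \Pic^{0}_{X/k}(\mathbb{F}_{q^n})$. Each $\Pic^{0}_{X/k}(\mathbb{F}_{q^n})$ is finite, because a scheme of finite type over a finite field has only finitely many points rational over that field; hence $\Pic^{0}(X)$ is a union of finite groups, so every element is torsion. This disposes of the $\QQ$-case.

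For $\KK = \RR$, I would write $D = \sum_{i=1}^r a_i D_i$ with $a_i \in \RR$ and $D_i \in \Div(X)$, and look at the images $\overline{D_i}$ of the classes $[D_i]$ in the free abelian group $N := \NS(X)/(\text{torsion}) \cong \ZZ^{\rho}$, where $\rho = \rank N$. Numerical triviality of $D$ is exactly the equation $\sum_{i} a_i \overline{D_i} = 0$ in $N \otimes \RR \cong \RR^{\rho}$, a system of linear equations with integer coefficients. Hence the solution space $W = \{(c_i) \in \RR^r : \sum_i c_i \overline{D_i} = 0\}$ is defined over $\QQ$, and I can pick a basis $v_1, \ldots, v_d$ of $W \cap \QQ^r$ and write the coefficient vector $(a_1, \ldots, a_r) = \sum_{j} t_j v_j$ with $t_j \in \RR$. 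Each $v_j = (v_{j1}, \ldots, v_{jr})$ produces a numerically trivial $\QQ$-Cartier divisor $Z_j = \sum_i v_{ji} D_i$, which is $\QQ$-linearly equivalent to $0$ by the already-proven $\QQ$-case; writing $Z_j = \sum_k c_{jk}(\phi_{jk})$ with $c_{jk} \in \QQ$, the identity $D = \sum_j t_j Z_j$ of $\RR$-Cartier divisors gives $D = \sum_{j,k} (t_j c_{jk})(\phi_{jk})$, i.e. $D \sim_{\RR} 0$.

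The only genuinely non-formal input is the torsion assertion for $\Pic^{0}(X)$ over $\overline{\mathbb{F}}_p$, which rests on the finite-type property of the Picard scheme and on the finiteness of $\mathbb{F}_{q^n}$-points; everything else is linear algebra over $\QQ$ together with the finite generation of $\NS(X)$. I expect the $\RR$-to-$\QQ$ reduction to be routine, and the main care is simply to record that numerical triviality cuts out a $\QQ$-rational subspace, so that a rational basis of solutions exists.
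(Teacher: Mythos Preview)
Your argument is correct. For $\KK=\QQ$ the paper simply declares the result well-known, so your Picard-scheme explanation is strictly more than the paper supplies. For $\KK=\RR$ both you and the paper reduce to the $\QQ$-case via the $\QQ$-rationality of numerical triviality, but the packaging differs: you pass to $\NS(X)/(\text{torsion})$, view the condition $\sum a_i\overline{D_i}=0$ as a rational linear system, and pick a $\QQ$-basis of its solution space; the paper instead rearranges $D=\sum a_i D_i$ so that $a_1,\ldots,a_r$ are linearly independent over $\QQ$, and then reads off directly from $0=(D\cdot C)=\sum a_i(D_i\cdot C)$ with $(D_i\cdot C)\in\ZZ$ that each $D_i$ is numerically trivial. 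The paper's route is a touch slicker since it avoids any mention of $\NS(X)$ or solution spaces and uses only the integrality of intersection numbers, but the underlying idea is the same.
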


\begin{proof}
If $\KK = \QQ$, then the assertion is well-known, so that we assume that $\KK = \RR$.
We set $D = a_1 D_1 + \cdots + a_r D_r$, where $D_1, \ldots, D_r$ are Cartier divisors on $X$
and $a_1, \ldots, a_r \in \RR$. Considering a $\QQ$-basis of $\QQ a_1 + \cdots + \QQ a_r$ in $\RR$,
we may assume that $a_1, \ldots, a_r$ are linearly independent over $\QQ$.
Let $C$ be an irreducible curve on $X$. Note that
\[
0 = (D \cdot C) = a_1(D_1 \cdot C) + \cdots + a_r(D_r \cdot C)
\]
and $(D_1 \cdot C), \ldots, (D_r \cdot C) \in \ZZ$, and hence
$(D_1 \cdot C) = \cdots = (D_r \cdot C) = 0$
because
$a_1, \ldots, a_r$ are linearly independent over $\QQ$.
Thus $D_1, \ldots, D_r$ are numerically equivalent to zero, so that
$D_1, \ldots, D_r$ are $\QQ$-linearly equivalent to the zero divisor.
Therefore, the assertion follows.
\end{proof}

\begin{Proposition}
\label{prop:R:effective:Q:effective}
Let $X$ be a normal projective variety over $k$ and
let $D$ be a $\QQ$-Cartier divisor on $X$.
If $D$ is $\RR$-effective, then $D$ is $\QQ$-effective.
\end{Proposition}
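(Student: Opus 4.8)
The plan is to replace the real coefficients witnessing the $\RR$-effectivity of $D$ by rational ones, using the fact that effectivity along each prime divisor is a linear inequality with rational coefficients. First I would unwind the hypothesis: since $D$ is $\RR$-effective, there are nonzero rational functions $\phi_1,\dots,\phi_r$ on $X$ and real numbers $a_1,\dots,a_r$ such that
\[
D + a_1(\phi_1) + \cdots + a_r(\phi_r)
\]
is an effective $\RR$-Cartier divisor. Let $\Sigma := \Supp(D) \cup \bigcup_{i=1}^r \Supp((\phi_i))$; this is a finite set of prime divisors, and for every $(b_1,\dots,b_r) \in \RR^r$ the divisor $D + \sum_i b_i(\phi_i)$ is supported on $\Sigma$. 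Hence $D + \sum_i b_i(\phi_i)$ is effective if and only if, for each $\Gamma \in \Sigma$,
\[
\ord_\Gamma(D) + \sum_{i=1}^r b_i\, \ord_\Gamma(\phi_i) \ge 0,
\]
where $\ord_\Gamma(D)$ denotes the coefficient of $\Gamma$ in the Weil divisor attached to $D$ (extended $\QQ$-linearly).

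The key step is then a rationality observation. Because $D$ is a $\QQ$-Cartier divisor, $\ord_\Gamma(D) \in \QQ$ for every $\Gamma$, while $\ord_\Gamma(\phi_i) \in \ZZ$; thus the system of inequalities above defines a polyhedron $P \subseteq \RR^r$ all of whose defining data are rational. By hypothesis $(a_1,\dots,a_r) \in P$, so $P$ is nonempty. A nonempty polyhedron defined by finitely many linear inequalities with rational coefficients always contains a rational point (for instance by Fourier--Motzkin elimination, which proceeds by rational operations only, so a consistent rational system has a rational solution). I would therefore pick a point $(b_1,\dots,b_r) \in P \cap \QQ^r$.

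Finally I would set $F := D + \sum_{i=1}^r b_i(\phi_i)$. By the choice of $(b_i)$ every coefficient of $F$ along a prime divisor is $\ge 0$; choosing a positive integer $n$ with $nF$ an integral Cartier divisor, the divisor $nF$ has effective associated Weil divisor and hence is an effective Cartier divisor by normality of $X$, so $F = \tfrac1n(nF)$ is an effective $\QQ$-Cartier divisor. Since $D - F = \sum_i (-b_i)(\phi_i)$ with $-b_i \in \QQ$, we have $D \sim_{\QQ} F$, which shows that $D$ is $\QQ$-effective.

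I expect the only real content to be the rationality step; everything else is formal bookkeeping. The points where a little care is needed are the reduction of $\RR$-effectivity to the finite system of coefficient inequalities (one must observe that $D + \sum_i b_i(\phi_i)$ can only be supported on $\Sigma$, independently of the choice of $b$) and the passage from ``all coefficients nonnegative'' to ``effective $\QQ$-Cartier divisor'', which is precisely where the normality of $X$ is used.
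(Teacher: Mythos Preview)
Your proof is correct. Both your argument and the paper's reduce to the same principle: effectivity of $D+\sum_i b_i(\phi_i)$ is governed by finitely many linear inequalities with rational data, so a real solution forces a rational one. The implementations differ. You observe directly that $\Sigma=\Supp(D)\cup\bigcup_i\Supp((\phi_i))$ is already a finite set of prime divisors and then invoke the standard fact that a nonempty rational polyhedron has a rational point. The paper instead first massages the data so that (after replacing $D$ by a $\QQ$-linearly equivalent $D'$) the real coefficients satisfy that $1,a_1,\dots,a_r$ are linearly independent over $\QQ$; it then uses this independence to deduce that $\Supp(D')$ and each $\Supp((\phi_i))$ lie inside $\Supp(L)$ for $L=D'+\sum_i a_i(\phi_i)$, and only afterwards perturbs the $a_i$ to rationals. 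Your route is more economical: once one notices that $\Sigma$ is finite to begin with, the linear-independence detour is unnecessary. The paper's argument, on the other hand, makes the perturbation step completely explicit without appealing to any general polyhedral fact.
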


\begin{proof}
As $D$ is $\RR$-effective,
there are non-zero rational functions $\psi_1, \ldots, \psi_l$ on $X$ and
$b_1, \ldots, b_l \in \RR$ 
such that
$D + b_1 (\psi_1) + \cdots + b_l (\psi_l)$
is effective. We set $V = \QQ b_1 + \cdots + \QQ b_l \subseteq \RR$.
If $V \subseteq \QQ$, then $b_1, \ldots, b_l \in \QQ$,
so that we may assume that $V \not\subseteq \QQ$.

\begin{Claim}
There are non-zero rational functions $\phi_1, \ldots, \phi_r$ on $X$,
$a_1, \ldots, a_r \in \RR$ and a $\QQ$-Cartier divisor $D'$ on $X$ such that
$D \sim_{\QQ} D'$, $D' + a_1(\phi_1) + \cdots + a_r(\phi_r)$ is effective and
$1, a_1, \ldots, a_r$ are linearly independent over $\QQ$.
\end{Claim}

\begin{proof}
We can find a basis $a_1, \ldots, a_{r}$
of $V$ over $\QQ$ with the following properties:
\begin{enumerate}
\renewcommand{\labelenumi}{(\roman{enumi})}
\item
If we set $b_i = \sum_{j=1}^{r} c_{ij} a_j$, then
$c_{ij} \in \ZZ$ for all $i,j$.

\item
If $V \cap \QQ \not= \{ 0 \}$,
then $a_1 \in \QQ^{\times}$.
\end{enumerate}
We put $\phi_j = \prod_{i=1}^l \psi_i^{c_{ij}}$. Note that
$\sum_{i=1}^l b_i (\psi_i) = \sum_{j=1}^{r} a_j ( \phi_j )$.
Therefore, in the case where $V \cap \QQ = \{ 0 \}$, $1, a_1, \ldots, a_r$ are
linearly independent over $\QQ$ and $D + \sum_{j=1}^{r} a_j ( \phi_j )$ is effective.
Otherwise, $1, a_2, \ldots, a_r$ are linearly independent over $\QQ$ and
$\left(D + a_1(\phi_1)\right) + \sum_{j=2}^{r} a_j ( \phi_j )$ is effective.
\end{proof}

We set $L = D' + a_1 (\phi_1) + \cdots + a_r (\phi_r)$.
Let $\Gamma$ be a prime divisor with $\Gamma \not\subseteq \Supp(L)$. Then
\[
0 = \mult_{\Gamma}(L) = \mult_{\Gamma} (D') + a_1 \ord_{\Gamma}(\phi_1) + \cdots + a_r \ord_{\Gamma}(\phi_r),
\]
so that 
$\mult_{\Gamma} (D') = \ord_{\Gamma}(\phi_1) = \cdots = \ord_{\Gamma}(\phi_r) = 0$
because
$1, a_1, \ldots, a_r$ are linearly independent over $\QQ$.
Thus,
\[
\Supp(D'), \Supp((\phi_1)), \ldots, \Supp((\phi_r)) \subseteq \Supp\left( L \right).
\]
Therefore, we can find $a'_1, \ldots, a'_r \in \QQ$ 
such that
$D' + a'_1 (\phi_1) + \cdots + a'_r (\phi_r)$ is effective, and hence $D$ is $\QQ$-effective.
\end{proof}

\section{Proof of Theorem~\ref{thm:main:pseudo:eff:eff}}

Let $k$ be an algebraic closure of a finite field. 
Let $C$ be a smooth projective curve over $k$.
Let us begin with the following lemma.

\begin{Lemma}
\label{lem:deg:non:negative:div}
Let $\KK$ be either $\QQ$ or $\RR$.
Let $A$ be a $\KK$-Cartier divisor on $C$.
If $\deg(A) \geq 0$, then $A$ is $\KK$-effective.
\end{Lemma}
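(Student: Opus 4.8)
The plan is to show that $A$ is $\KK$-linearly equivalent to a non-negative multiple of a single point, exploiting the fact that on a curve over an algebraic closure of a finite field every degree-zero class vanishes modulo $\KK$-linear equivalence. First I would fix a closed (hence $k$-rational) point $P \in C$; such a point exists because $k$ is algebraically closed. Since the degree map $\deg : \Div(C)_{\KK} \to \KK$ is $\KK$-linear and $\deg([P]) = 1$, the $\KK$-Cartier divisor
\[
B := A - \deg(A)\,[P]
\]
satisfies $\deg(B) = \deg(A) - \deg(A) = 0$.

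On the curve $C$ a $\KK$-Cartier divisor is numerically trivial precisely when its degree is zero, since the only irreducible curve in $C$ is $C$ itself and the relevant intersection number of a divisor is then just its degree. Thus $B$ is numerically trivial, and Lemma~\ref{lem:num:zero:div} (which is exactly where the hypothesis that $k$ is an algebraic closure of a finite field is used) gives $B \sim_{\KK} 0$, that is, $A \sim_{\KK} \deg(A)\,[P]$. Because $\deg(A) \geq 0$, the divisor $\deg(A)\,[P]$ is an effective $\KK$-Cartier divisor, and therefore $A$ is $\KK$-effective, as required.

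The essential input---and the only place the hypothesis on $k$ enters---is the triviality of $B$ modulo $\KK$-linear equivalence, which ultimately rests on the torsion-ness of $\Pic^0(C)(k)$ when $k$ is an algebraic closure of a finite field: every $k$-point of the Jacobian is already defined over some finite subfield $\mathbb{F}_q \subset k$, hence lies in the finite group $\Pic^0(C)(\mathbb{F}_q)$. I expect this to be the crux; once it is available (packaged as Lemma~\ref{lem:num:zero:div}), the argument is completely uniform in $\KK$ and requires no separate treatment of the real case, since nowhere did I use integrality of the coefficients of $A$. Over a base field that is not an algebraic closure of a finite field the same scheme breaks down exactly because a degree-zero class can be non-torsion, which is consistent with the failure of the $\RR$-version recorded in the introduction.
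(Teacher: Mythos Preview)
Your proof is correct and in fact a bit cleaner than the paper's. The paper proceeds by cases: for $\KK=\QQ$ it declares the assertion obvious (Riemann--Roch when $\deg(A)>0$, torsion of $\Pic^0$ when $\deg(A)=0$); for $\KK=\RR$ and $\deg(A)=0$ it invokes Lemma~\ref{lem:num:zero:div} exactly as you do; and for $\KK=\RR$ with $\deg(A)>0$ it chooses a $\QQ$-Cartier divisor $A'\leq A$ with $\deg(A')>0$ and reduces to the $\QQ$-case via the effective remainder $A-A'$. Your device of subtracting $\deg(A)\,[P]$ to force degree zero lets Lemma~\ref{lem:num:zero:div} carry the entire argument at once, with no approximation step and no separate treatment of $\QQ$ versus $\RR$. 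Both routes rest on the same essential input---torsion of $\Pic^0(C)$ over an algebraic closure of a finite field, packaged as Lemma~\ref{lem:num:zero:div}---so nothing is gained or lost mathematically; your organization is simply more uniform. One minor remark: your closing sentence about ``the failure of the $\RR$-version recorded in the introduction'' points to a higher-dimensional phenomenon (Example~\ref{exam:R:version:not:true}), whereas what would actually go wrong for \emph{this} lemma over a general field is already the $\QQ$-statement, via non-torsion degree-zero classes; this is purely an expository point and does not affect the proof.
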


\begin{proof}
If $\KK=\QQ$, then the assertion is obvious.
We assume that $\KK = \RR$.
If $\deg(A) = 0$, the assertion follows from
Lemma~\ref{lem:num:zero:div}.
Next we consider the case where $\deg(A) > 0$.
We can find a $\QQ$-Cartier divisor $A'$ such that $A' \leq A$ and $\deg(A') > 0$.
Thus the previous observation implies the assertion.
\end{proof}

As a consequence of (F\ref{well:fact:03}), (F\ref{well:fact:04}) and (F\ref{well:fact:05}), 
we have the following splitting 
theorem, which was obtained by Biswas and Parameswaran \cite[Proposition~2.1]{BP}.

\begin{Theorem}
\label{thm:splitting:locally:free:sheaf}
For a locally free sheaf $E$ on $C$,
there are a surjective morphism $\pi : C' \to C$ of smooth projective curves over $k$ and
invertible sheaves $L_1, \ldots, L_r$ on $C'$ such that
$\pi^*(E) \simeq L_1 \oplus \cdots \oplus L_r$.
\end{Theorem}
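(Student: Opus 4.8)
The plan is to reduce, by a sequence of finite coverings, to the case where the graded pieces of a strong Harder--Narasimhan filtration are direct sums of line bundles, and then to split that filtration using a large power of the Frobenius. First I would apply (F\ref{well:fact:04}) to obtain a surjective morphism $\pi_1 : C_1 \to C$ such that $\pi_1^*(E)$ carries a strong Harder--Narasimhan filtration $0 = E_0 \subsetneq E_1 \subsetneq \cdots \subsetneq E_s = \pi_1^*(E)$ with strongly semistable quotients $V_i := E_i/E_{i-1}$ of strictly decreasing slopes $\mu(V_1) > \cdots > \mu(V_s)$. Since both strong semistability and the strict slope inequalities persist under pullback, I may replace $C$ and $E$ by $C_1$ and $\pi_1^*(E)$ and keep this filtration (composing the coverings produced at each stage will supply the $\pi$ required in the theorem).

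Next I would arrange that each $V_i$ is a direct sum of line bundles. Put $r_i = \rank V_i$. Pulling back along a finite covering whose degree is divisible by $\operatorname{lcm}(r_1,\dots,r_s)$ makes every slope $\mu(V_i)$ an integer, and then, because $\Pic^0$ of a curve over $k$ is a divisible group, $\det(V_i)$ admits an $r_i$-th root $M_i$. By (F\ref{well:fact:03}) the sheaf $V_i \otimes M_i^{-1}$ is again strongly semistable, and it has trivial determinant, so (F\ref{well:fact:05}) supplies a further covering on which $V_i \otimes M_i^{-1} \simeq \OO^{\oplus r_i}$; passing to a common refinement for $i=1,\dots,s$ (all relevant properties being stable under further pullback), I reach a curve on which $V_i \simeq M_i^{\oplus r_i}$ is a direct sum of $r_i$ copies of a line bundle of integer degree $e_i := \mu(V_i)$, with $e_1 > e_2 > \cdots > e_s$.

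Finally I would split the filtration. The obstruction to splitting $V_i$ off from $E_i$ lies in $\Ext^1(V_i, E_{i-1})$, which by the filtration of $E_{i-1}$ is controlled by the groups $\Ext^1(V_i, V_j) \simeq H^1(C, V_i^{\vee} \otimes V_j)$ for $j < i$; each $V_i^{\vee} \otimes V_j$ is a direct sum of line bundles of degree $e_j - e_i > 0$. Applying the $m$-th Frobenius power $F^m$ (legitimate over $k$ exactly as in (F\ref{well:fact:02})) multiplies every degree by $p^m$ while fixing the genus $g$, so once $p^m > 2g-2$ all these line bundles have degree exceeding $2g-2$ and hence vanishing $H^1$ by Serre duality. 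Then every $\Ext^1(V_i, V_j)$ with $j<i$ vanishes, so each $\Ext^1(V_i, E_{i-1})$ vanishes, every extension in the filtration splits, and $(F^m)^*(E) \simeq \bigoplus_i (F^m)^*(V_i)$ is a direct sum of line bundles. Composing $\pi_1$, the coverings of the second step, and $F^m$ yields the desired $\pi : C' \to C$.

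The step I expect to be the main obstacle is the last one. A separable covering is useless here, since it scales both a slope and the canonical degree $2g-2$ by the same factor, whereas the whole point is to inflate degrees past $2g-2$ so as to kill the extension classes in $H^1$; this forces the use of the inseparable Frobenius, which keeps the genus fixed while multiplying degrees by $p^m$. It is precisely the hypothesis $k = \overline{\mathbb{F}}_p$ that makes this maneuver, together with the trivialization (F\ref{well:fact:05}), available.
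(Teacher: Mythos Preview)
Your argument is correct and parallels the paper's proof closely: both use (F\ref{well:fact:04}) to obtain a strong Harder--Narasimhan filtration, (F\ref{well:fact:05}) to trivialize the strongly semistable graded pieces after a covering, and a high Frobenius pullback to kill the extension classes and split the filtration. The only real difference is the order of operations. The paper first applies Frobenius to split the filtration---computing the vanishing of $\Ext^1\big((F_k^m)^*(E/E_i),(F_k^m)^*(E_i/E_{i-1})\big)$ via Serre duality and the strong Harder--Narasimhan filtration of the relevant $\mathcal{H}om$-bundle tensored with $\omega_{C_m}$---and only afterwards handles each strongly semistable summand by (F\ref{well:fact:05}). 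You instead trivialize the graded pieces to sums of line bundles first, which lets you replace that filtration argument by the elementary fact that $H^1$ of a line bundle of degree exceeding $2g-2$ vanishes. Your ordering buys a slightly more transparent $\Ext^1$-computation at the cost of invoking (F\ref{well:fact:05}) earlier; the paper's ordering keeps the splitting step self-contained for arbitrary strongly semistable quotients. Either way the essential mechanism---Frobenius inflates slope gaps while fixing the genus---is the same.
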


\begin{proof}
For reader's convenience, we give a sketch of the proof.
First we assume that $E$ is strongly semistable.
Let $\xi_E$ be a Cartier divisor on $C$ with $\OO_C(\xi_E) \simeq \det(E)$.
Let $h : B \to C$ be
a surjective morphism of smooth projective curves over $k$
such that $h^*(\xi_E)$ is divisible by $\rank(E)$.
We set $E' = h^*(E) \otimes \OO_{B}(-h^*(\xi_E)/\rank(E))$.
As $\det(E') \simeq \OO_{B}$, the assertion follows from 
(F\ref{well:fact:05}).

By the above observation, 
it is sufficient to find a surjective morphism $\pi : C' \to C$ of smooth projective curves over $k$ and
strongly semistable locally free sheaves $Q_1, \ldots, Q_n$ on $C'$ such that
\[
{\pi}^*(E) = Q_1 \oplus \cdots \oplus Q_n.
\]
Moreover, by (F\ref{well:fact:04}), we may assume that
$E$ has the strong Harder-Narasimham filtration
\[
0 = E_0 \subsetneq E_1 \subsetneq E_2 \subsetneq \cdots \subsetneq E_{n-1} \subsetneq E_{n} = E.
\]
Clearly we may further assume that $n \geq 2$. 
For a non-negative integer $m$,
we set 
\[
C_m := X \times_{\Spec(k)} \Spec(k),
\]
where the morphism  $\Spec(k) \to \Spec(k)$ is given by $x \mapsto x^{1/p^m}$.
Let $F_k^m : C_m \to C$ be the relative $m$-th Frobenius morphism over $k$.
Put
\[
G^m_{i,j} := (F_k^m)^*\left((E_{j}/E_i)\otimes (E_i/E_{i-1})^{\vee}\right) \otimes \omega_{C_m}
\]
for $i=1, \ldots, n-1$ and $j = i, \ldots, n$.
We can find a positive integer $m$ such that
\[
\mu\left(G^m_{i,i+1}\right) \\
=
p^m \left( \mu(E_{i+1}/E_i) - \mu(E_i/E_{i-1}) \right) + \deg(\omega_{C})< 0
\]
for all $i = 1, \ldots, n-1$.
By using (F\ref{well:fact:03}), we can see that
\[
0 = G^m_{i,i} \subsetneq G^m_{i,i+1} \subsetneq G^m_{i, i+2} \subsetneq \cdots \subsetneq G^m_{i, n-1} \subsetneq G^m_{i,n}
\]
is the strong Harder-Narasimham filtration of
$G^m_{i,n}$,
so that 
$H^0\left(C_m,\ G^m_{i,n}\right) = \{ 0 \}$,
which yields
\[
\operatorname{Ext}^1\left((F_k^m)^*(E/E_{i}), \ (F_k^m)^*(E_i/E_{i-1})\right) = 0
\]
because of Serre's duality theorem. Therefore, an exact sequence
\[
0 \to (F_k^m)^*(E_i/E_{i-1}) \to (F_k^m)^*(E/E_{i-1}) \to (F_k^m)^*(E/E_{i}) \to 0
\]
splits, that is, $(F_k^m)^*(E/E_{i-1}) \simeq (F_k^m)^*(E_i/E_{i-1}) \oplus (F_k^m)^*(E/E_{i})$ for $i=1, \ldots, n-1$, and hence
\[
(F_k^m)^*(E) \simeq \bigoplus_{i=1}^n (F_k^m)^*(E_i/E_{i-1}),
\]
as required.
\end{proof}

\begin{proof}[Proof of Theorem~\ref{thm:main:pseudo:eff:eff}]
By virtue of Theorem~\ref{thm:splitting:locally:free:sheaf} and Lemma~\ref{lem:eff:finite:covering},
we may assume that 
\[
E \simeq L_1 \oplus \cdots \oplus L_r
\]
for some invertible sheaves $L_1, \ldots, L_r$ on $C$.
We set 
\[
d = \max \{ \deg(L_1), \ldots, \deg(L_r) \}\quad\text{and}\quad
I = \{ i \mid \deg(L_i) = d \}.
\]
There is a $\KK$-Cartier divisor $A$ on $C$ such that
$D \sim_{\KK} \lambda \Theta_E - f_E^*(A)$ for some $\lambda \in \KK$. 
Let $M$ be an ample divisor on $C$ such that $T := \Theta_E + f_E^*(M)$ is ample.
As $D$ is pseudo-effective, we have 
\[
0 \leq (D \cdot T^{r-2} \cdot f_E^*(M)) = ((\lambda T - f_E^*(A + \lambda M)) \cdot T^{r-2} \cdot f_E^*(M)) = \lambda \deg(M),
\]
and hence
$\lambda \geq 0$. If $\lambda =0$, then $0 \leq (D \cdot T^{r-1}) = \deg(-A)$.
Thus, by Lemma~\ref{lem:deg:non:negative:div}, $-A$ is $\KK$-effective, so that
the assertion follows.

We assume that $\lambda > 0$. Replacing $D$ by $D/\lambda$, we may assume that $\lambda = 1$.
Let $\xi$ be a Cartier divisor on $C$ such that $\OO_C(\xi) \simeq L_{i_0}$ for some $i_0 \in I$.
Note that the first part $E_1$ of the strong Harder-Narasimham filtration
of $E$ is $\bigoplus_{i \in I} L_i$,
so that, by Proposition~\ref{prop:criterion:nef:pseudo},
$\deg(A) \leq \deg(\xi)$. 
If we set $B = \xi - A$,
then, by Lemma~\ref{lem:deg:non:negative:div}, 
$B$ is $\KK$-effective because $\deg(B) \geq 0$.
Moreover, as 
\[
\Theta_E - f_E^*(A) = \Theta_E - f_E^*(\xi) + f_E^*(B),
\]
it is sufficient to consider the case where 
$D = \Theta_E - f_E^*(\xi)$.
In this case, the assertion is obvious because
\[
H^0(\PP(E), \OO_{\PP(E)}(D)) = H^0(C, E \otimes \OO_{C}(-\xi)) = 
H^0\left(C, \ \bigoplus_{i=1}^r L_i \otimes \OO_{C}(-\xi) \right) \not= \{ 0 \}.
\]
\end{proof}

As a consequence of Theorem~\ref{thm:main:pseudo:eff:eff},
we can recover a result due to \cite{BS}.

\begin{Corollary}
\label{cor:pseudo:eff:eff}
Let $k$, $C$ and $E$ be same as in Theorem~\ref{thm:main:pseudo:eff:eff}.
We assume that $r = 2$.
Let $D$ be a Cartier divisor on $\PP(E)$ such that $(D \cdot Y) > 0$
for all irreducible curves $Y$ on $\PP(E)$. Then $D$ is ample.
\end{Corollary}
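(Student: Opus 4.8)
The plan is to invoke the Nakai--Moishezon criterion on the surface $\PP(E)$, which reduces the ampleness of $D$ to the single inequality $(D^2) > 0$, and then to extract this inequality from Theorem~\ref{thm:main:pseudo:eff:eff}.

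First I would note that, since $\dim C = 1$ and $\rank E = 2$, the projective bundle $\PP(E)$ is a smooth projective surface, so the Nakai--Moishezon criterion applies: $D$ is ample if and only if $(D^2) > 0$ and $(D \cdot Y) > 0$ for every irreducible curve $Y$. The second family of inequalities is exactly our hypothesis, so only $(D^2) > 0$ remains to be checked. The hypothesis also shows that $D$ is nef, and a nef divisor is pseudo-effective: for any big $\QQ$-Cartier divisor $B$, writing $B \sim_{\QQ} A + F$ with $A$ ample and $F$ effective, one has $D + B \sim_{\QQ} (D + A) + F$ with $D + A$ ample, so that $D + B$ is big.

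Now I would regard $D$ as a pseudo-effective $\QQ$-Cartier divisor and apply Theorem~\ref{thm:main:pseudo:eff:eff} with $\KK = \QQ$ to conclude that $D$ is $\QQ$-effective, i.e. $D \sim_{\QQ} E'$ for some effective $\QQ$-Cartier divisor $E'$. The divisor $E'$ cannot be zero, for otherwise $D \sim_{\QQ} 0$ would be numerically trivial, contradicting $(D \cdot Y) > 0$. Writing $E' = \sum_j b_j \Gamma_j$ with $b_j > 0$ and $\Gamma_j$ irreducible, the invariance of intersection numbers under $\QQ$-linear equivalence gives
\[
(D^2) = (D \cdot E') = \sum_j b_j\, (D \cdot \Gamma_j) > 0,
\]
since each $(D \cdot \Gamma_j) > 0$.

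Combining $(D^2) > 0$ with the hypothesis $(D \cdot Y) > 0$, the Nakai--Moishezon criterion then yields that $D$ is ample. The only substantial ingredient is the passage from pseudo-effectivity to $\QQ$-effectivity, supplied by Theorem~\ref{thm:main:pseudo:eff:eff}; everything else is standard surface intersection theory. I expect the point requiring care to be the verification that $E' \neq 0$, which is precisely where the strict positivity $(D \cdot Y) > 0$ --- rather than mere nefness --- enters.
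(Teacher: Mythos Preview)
Your proof is correct and follows essentially the same route as the paper: observe that $D$ is nef hence pseudo-effective, apply Theorem~\ref{thm:main:pseudo:eff:eff} to obtain $D \sim_{\QQ} E'$ with $E'$ effective and nonzero, compute $(D^2) = (D \cdot E') > 0$, and conclude by Nakai--Moishezon. The paper's version is simply more terse, but the logic is identical.
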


\begin{proof}
As $D$ is nef, $D$ is pseudo-effective, so that, by Theorem~\ref{thm:main:pseudo:eff:eff},
there is an effective $\QQ$-Cartier divisor $E$ on $X$ such that
$D \sim_{\QQ} E$. As $E \not= 0$, we have
$(D \cdot D) = (D \cdot E) > 0$.
Therefore, $D$ is ample by Nakai-Moishezon criterion.
\end{proof}

\begin{Remark}
\label{rem:G:Dirichlet:imply:Keel}
The argument in the proof of 
Corollary~\ref{cor:pseudo:eff:eff} actually
shows that the $\QQ$-version of
Question~\ref{question:pseudo:effective} on algebraic surfaces implies 
Question~\ref{question:Keel}.
\end{Remark}

\section{Numerically effectivity on abelian varieties}

The purpose of this section is to give an affirmative answer for
the $\QQ$-version of Question~\ref{question:pseudo:effective} on
abelian varieties.
Let $A$ be an abelian variety over an algebraically closed field $k$.
A key observation is the following proposition.

\begin{Proposition}
\label{prop:nef:num:Q:effective}
If a $\QQ$-Cartier divisor $D$ on $A$ is nef, then $D$ is numerically equivalent to a $\QQ$-effective $\QQ$-Cartier divisor.
\end{Proposition}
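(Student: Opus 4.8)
The plan is to descend $D$ along a suitable quotient of $A$ to a class that is ample, and then pull back. Since both the hypothesis and the conclusion are unchanged if $D$ is replaced by a positive integral multiple, I would first clear denominators and assume $D = L$ is an honest nef line bundle on $A$; the conclusion for $L$ then gives it for $D = \frac{1}{n}L$. The basic tool is the Mumford homomorphism $\phi_L \colon A \to \hat{A}$ to the dual abelian variety $\hat{A} = \Pic^0(A)$, sending $x$ to the class of $t_x^{*}L \otimes L^{-1}$. This map is symmetric, depends only on the class of $L$ in $\NS(A)$, and the assignment $L \mapsto \phi_L$ identifies $\NS(A)$ with the group of symmetric homomorphisms $A \to \hat{A}$; in particular two classes are numerically equivalent exactly when their associated homomorphisms coincide.

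Next I would put $A_1 := (\ker \phi_L)^{0}_{\mathrm{red}}$, the reduced identity component of the kernel, which is an abelian subvariety, and form the quotient $p \colon A \to B := A/A_1$. As $\phi_L$ kills $A_1$, it factors as $\phi_L = g \circ p$ for some $g \colon B \to \hat{A}$. Dualizing the exact sequence $0 \to A_1 \to A \to B \to 0$ gives $\hat{B} = \Image(\hat{p}) \subseteq \hat{A}$, and the symmetry $\hat{\phi}_L = \phi_L$ forces $g$ to factor through $\hat{B}$, producing a symmetric homomorphism $\bar{g} \colon B \to \hat{B}$ with $\phi_L = \hat{p} \circ \bar{g} \circ p$. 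By the identification above, $\bar{g} = \phi_{\bar{L}}$ for a divisor class $\bar{L}$ on $B$, and then $\phi_{p^{*}\bar{L}} = \hat{p}\circ \phi_{\bar{L}} \circ p = \phi_L$, so $D \equiv p^{*}\bar{L}$. Because $A_1$ absorbed the entire connected part of $\ker\phi_L$, the kernel of $\bar{g} = \phi_{\bar{L}}$ is finite, i.e. $\bar{L}$ is non-degenerate.

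It remains to see that $\bar{L}$ is ample. First $\bar{L}$ is nef: the class $p^{*}\bar{L} \equiv D$ is nef, $p$ is surjective, and every curve on $B$ is dominated by a curve on $A$, so nef-ness descends along $p$. A nef and non-degenerate line bundle on an abelian variety is ample---this is the ampleness half of Mumford's index theorem (equivalently, its numerical form is positive semi-definite and non-degenerate, hence positive definite). Thus $\bar{L}$ is ample, so a positive multiple is effective and $\bar{L}$ is $\QQ$-effective; pulling back, $p^{*}\bar{L}$ is $\QQ$-effective, and since $D \equiv p^{*}\bar{L}$ the proposition follows. Note that no assumption on $k$ beyond algebraic closedness is needed here; the finiteness of $k$ will only enter later, when passing from numerical triviality to genuine effectivity (cf. Lemma~\ref{lem:num:zero:div}).

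I expect the main obstacle to be the descent step of the second paragraph: checking that the symmetric homomorphism $\phi_L$ really does factor through $\hat{B}$ and yields a \emph{symmetric} isogeny $\bar{g} = \phi_{\bar{L}}$ on $B$, so that the numerical class of $D$ genuinely descends to a non-degenerate class downstairs. Once this factorization is established, the implication ``non-degenerate $+$ nef $\Rightarrow$ ample'' is a direct appeal to the index theorem, and $\QQ$-effectivity is then immediate from ampleness.
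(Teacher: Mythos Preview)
Your argument is correct and shares its core idea with the paper's proof: both quotient $A$ by the identity component of $K(L)=\ker\phi_L$ and descend $D$ to a non-degenerate nef class on the quotient, which is then big and hence $\QQ$-effective. The execution differs in two respects. First, the paper descends the actual line bundle rather than just the numerical class: after replacing $L$ by a symmetric representative, it verifies $\rest{L^{\otimes 2}}{B+x}\simeq\OO_{B+x}$ on every coset of $B=(\ker\phi_L)^0$, so that $L^{\otimes 2}\simeq\pi^*\pi_*(L^{\otimes 2})$ and one obtains a genuine $\QQ$-linear equivalence $D\sim_{\QQ}\pi^*(D')$, not merely $D\equiv p^*\bar L$. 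Second, the paper is phrased as an induction on $\dim A$ and does not stop to check that $D'$ is non-degenerate on $A/B$, simply invoking the inductive hypothesis; you instead verify directly that $\phi_{\bar L}$ has finite kernel and finish in one step. Your route is cleaner and more structural, but it leans on the identification of $\NS(B)$ with the symmetric homomorphisms $B\to\hat B$ (which deserves a reference), whereas the paper's computation with the Mumford bundle on $A\times A$ is more self-contained. One small remark: for the conclusion you only need $\bar L$ to be big, and this follows at once from nefness together with $(\bar L^{\dim B})^2=\deg\phi_{\bar L}>0$; the stronger assertion that nef and non-degenerate implies ample is true on abelian varieties but is a bit more than the proposition requires.
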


\begin{proof}
We prove it by induction on $\dim A$. If $\dim A \leq 1$, then the assertion is obvious.
Clearly we may assume that $D$ is a Cartier divisor, so that we set $L = \OO_A(D)$.
As $L \otimes [-1]^*(L)$ is numerically equivalent to $L^{\otimes 2}$ (cf. \cite[p.75, (iv)]{Mumford}),
we may assume that $L$ is symmetric, that is, $L \simeq [-1]^*(L)$.
Let $K(L)$ be the closed subgroup of $A$ given by 
$K(L) = \{ x \in A \mid T_x^*(L) \simeq L \}$
(cf. \cite[p.60, Definition]{Mumford}).
If $K(L)$ is finite, then $L$ is nef and big by virtue of
\cite[p.150, The Riemann-Roch theorem]{Mumford}, so that
$D$ is $\QQ$-effective.
Otherwise, let $B$ be the connected component of $K(L)$ containing $0$.

\begin{Claim}
\begin{enumerate}
\renewcommand{\labelenumi}{(\arabic{enumi})}
\item
$\rest{T^*_x(L)}{B} \simeq \rest{L}{B}$ for all $x \in A$.

\item
$\rest{L^{\otimes 2}}{B+ x} \simeq \OO_{B+x}$ for $x \in A$.
\end{enumerate}
\end{Claim}

\begin{proof}
(1) Let $N$ be an invertible sheaf on $A \times A$ given by
\[
N = m^*(L) \otimes p_1^*(L^{-1}) \otimes p_2^{*}(L^{-1}),
\]
where $p_i : A \times A \to A$ is the projection to the $i$-th factor ($i=1,2$) and
$m$ is the addition morphism.
Note that $\rest{N}{B \times A} \simeq \OO_{B \times A}$ (cf. \cite[p.123, \S13]{Mumford}).
Fixing $x \in A$,
let us consider a morphism $\alpha : B \to B \times A$ given by $\alpha(y) = (y, x)$. Then
\[
\OO_B \simeq \alpha^*\left( \rest{m^*(L) \otimes p_1^*(L^{-1}) \otimes p_2^*(L^{-1})}{B \times A} \right) \simeq
\rest{T_x^*(L)}{B} \otimes \rest{L^{-1}}{B},
\]
as required.

(2) First we consider the case where $x = 0$.
As $\rest{N}{B \times A} \simeq \OO_{B \times A}$, we have
$\rest{N}{B \times B} \simeq \OO_{B \times B}$.
Using a morphism $\beta : B \to B \times B$ given by $\beta(y) = (y, -y)$,
we have
\[
\OO_B \simeq \beta^*(\rest{N}{B \times B}) = \rest{L^{-1}}{B} \otimes \rest{[-1]^*(L^{-1})}{B} \simeq \rest{L^{\otimes -2}}{B},
\]
as required.

In general, for $x \in A$, by (1) and the previous observation
together with the following commutative diagram
\[
\begin{CD}
B + x @>>> A \\
@V{T_{-x}}VV @VV{T_{-x}}V \\
B @>>> A,
\end{CD}
\]
we can see
\begin{align*}
\OO_{B + x} & = T_{-x}^*(\OO_B) \simeq T_{-x}^*\left(\rest{L^{\otimes 2}}{B}\right)
\simeq T_{-x}^*\left(\rest{T^*_x(L)^{\otimes 2}}{B}\right) \\
& = T_{-x}^*\left(\rest{T^*_x(L^{\otimes 2})}{B}\right) = \rest{T_{-x}^*(T^*_x(L^{\otimes 2}))}{B+x} = \rest{L^{\otimes 2}}{B + x}.
\end{align*}
\end{proof}

Let $\pi : A \to A/B$ be the canonical homomorphism.
By (2) in the above claim,
\[
\dim_{k(y)} H^0\left(\pi^{-1}(y), L^{\otimes 2}\right) = 1
\]
for all $y \in A/B$, so that,
by \cite[p.51, Corollary~2]{Mumford},
$\pi_*(L^{\otimes 2})$ is an invertible sheaf on $A/B$ and $\pi_*(L^{\otimes 2}) \otimes k(y) 
\overset{\sim}{\longrightarrow} H^0(\pi^{-1}(y), L^{\otimes 2})$.
Therefore, the natural homomorphism 
$\pi^*(\pi_*(L^{\otimes 2})) \to L^{\otimes 2}$ is an isomorphism, that is,
there is a $\QQ$-Cartier divisor $D'$ on $A/B$ such that $\pi^*(D') \sim_{\QQ} D$.
Note that $D'$ is also nef, so that, by the hypothesis of induction,
$D'$ is numerically equivalent to a $\QQ$-effective $\QQ$-Cartier divisor, and
hence the assertion follows.
\end{proof}

\begin{proof}[Proof of Proposition~\ref{prop:pseudo:abelian:variety}]
Proposition~\ref{prop:pseudo:abelian:variety} is a consequence of
Lemma~\ref{lem:num:zero:div} and
Proposition~\ref{prop:nef:num:Q:effective} because a pseudo-effective
$\QQ$-Cartier divisor on an abelian variety is nef.
\end{proof}

\begin{Example}
\label{exam:R:version:not:true}
Here we show that
the $\RR$-version of Question~\ref{question:pseudo:effective} does not hold in general.
Let $k$ be an algebraically closed field ($k$ is not
necessarily an algebraic closure of a finite field).
Let $C$ be an elliptic curve over $k$ and $A := C \times C$.
Let $\NS(A)$ be the N\'{e}ron-Severi group of $A$.
Note that $\rho := \rank \NS(A) \geq 3$.
By using the Hodge index theorem, we can find a basis
$e_1, \ldots, e_{\rho}$ of $\NS(A)_{\QQ} := \NS(A) \otimes_{\ZZ} \QQ$ with
the following properties:
\begin{enumerate}
\renewcommand{\labelenumi}{(\arabic{enumi})}
\item
$e_1$ is the class of the divisor $\{ 0 \} \times C + C \times \{ 0 \}$.
In particular, $(e_1 \cdot e_1) = 2$.

\item
$(e_i \cdot e_i) < 0$ for all $i=2, \ldots, \rho$.

\item
$(e_i \cdot e_j) = 0$ for all $1 \leq i \not= j \leq \rho$.
\end{enumerate}
We set $\lambda_i := -(e_i \cdot e_i)$ for $i=2,\ldots,\rho$.
Let $\Nef(A)$ be the closed cone in $\NS(A)_{\RR} := \NS(A) \otimes_{\ZZ} \RR$ generated by
ample $\QQ$-Cartier divisors on $A$.
It is well known that
\begin{align*}
\Nef(A) & = \left\{ \xi \in \NS(A)_{\RR} \mid (\xi^2) \geq 0, \ (\xi \cdot e_1) \geq 0 \right\} \\
& =
\left\{ x_1 e_1 + x_2 e_2 + \cdots + x_{\rho} e_{\rho} \mid 
\lambda_2 x_2^2 + \cdots + \lambda_{\rho} x_{\rho}^2 \leq 2 x_1^2, \ x_1 \geq 0 \right\}.
\end{align*}
We choose $(a_2, \ldots, a_{\rho}) \in \RR^{\rho-1}$ such that
\[
(a_2, \ldots, a_{\rho}) \not\in \QQ^{\rho-1}\quad\text{and}\quad 
\lambda_2a_2^2 + \cdots + \lambda_{\rho} a_{\rho}^2 = 2.
\]
Let $E_i$ be a $\QQ$-Cartier divisor on $A$ such that
the class of $E_i$ in $\NS(A)_{\QQ}$ is equal to $e_i$ for $i=1, \ldots, \rho$.
If we set $D := E_1 + a_2 E_2 + \cdots + a_{\rho} E_{\rho}$, then we have the following claim,
which is sufficient for our purpose.

\begin{Claim}
$D$ is nef and $D$ is not numerically equivalent to an effective $\RR$-Cartier divisor.
\end{Claim}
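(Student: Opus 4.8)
The plan is to verify the two assertions of the claim separately. For nefness, I would compute the self-intersection and intersection with $e_1$ using the orthogonality relations (2) and (3). First I would observe that the class of $D$ in $\NS(A)_{\RR}$ is $e_1 + a_2 e_2 + \cdots + a_{\rho} e_{\rho}$, so that by the orthogonality relations
\[
(D \cdot D) = (e_1 \cdot e_1) + \sum_{i=2}^{\rho} a_i^2 (e_i \cdot e_i) = 2 - \sum_{i=2}^{\rho} \lambda_i a_i^2 = 2 - 2 = 0
\]
by the normalization $\lambda_2 a_2^2 + \cdots + \lambda_{\rho} a_{\rho}^2 = 2$, and similarly $(D \cdot e_1) = (e_1 \cdot e_1) = 2 \geq 0$. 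Hence the class of $D$ lies in $\Nef(A)$ by the explicit description of the nef cone recalled in the example, which gives that $D$ is nef.

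For the second assertion I would argue by contradiction. Suppose $D$ were numerically equivalent to an effective $\RR$-Cartier divisor $D'$. The key point is that $D$ lies on the boundary of the nef cone (since $(D \cdot D) = 0$), so the only curve classes meeting $D$ in degree zero are those numerically proportional to $D$ itself. More precisely, I would use that for a nef class $\delta$ with $(\delta^2) = 0$ on an abelian surface, the equation $(\delta \cdot \gamma) = 0$ for an effective class $\gamma$ forces $\gamma$ to be numerically proportional to $\delta$ (this follows from the Hodge index theorem: the orthogonal complement of $\delta$ in $\NS(A)_{\RR}$ is negative semidefinite with kernel $\RR \delta$). Applying this, since any component of an effective divisor numerically equivalent to $D$ must itself have nonnegative intersection with the nef class $D$, and the total is zero, each component would be numerically proportional to $D$. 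Therefore the class $e_1 + a_2 e_2 + \cdots + a_{\rho} e_{\rho}$ would have to be a nonnegative real multiple of a rational (indeed integral) effective class, which is impossible because the coefficients $(a_2, \ldots, a_{\rho})$ are not rational while the coefficient of $e_1$ is $1 \in \QQ$; scaling cannot make all ratios rational simultaneously.

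The main obstacle I anticipate is making the irrationality argument airtight: I need to rule out not just that $D$ is proportional to a single integral class, but that it is a \emph{nonnegative} combination of finitely many integral effective classes, each of which is constrained by the boundary position of $D$. The cleanest route is to show that the ray $\RR_{\geq 0} D$ is an extremal ray of $\Nef(A)$ meeting the pseudo-effective cone in a way that forces any effective representative to have class exactly proportional to $D$; then the mismatch between the rational coefficient $1$ on $e_1$ and the irrational coefficients $a_i$ prevents $c \cdot D$ from being an integral (or even rational) class for any $c > 0$. Concretely, if $c D \equiv \sum_\Gamma m_\Gamma \Gamma$ with $m_\Gamma \in \RR_{>0}$ and each $\Gamma$ an integral effective curve, the extremality forces each $\Gamma \equiv t_\Gamma D$ with $t_\Gamma > 0$; comparing the $e_1$-coefficient gives $t_\Gamma \in \QQ$, whence $\Gamma$ has an irrational $e_i$-coefficient, contradicting $\Gamma \in \NS(A)$. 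This last step, invoking extremality of the ray together with the integrality of Néron–Severi classes against the irrationality of the $a_i$, is where the real content lies.
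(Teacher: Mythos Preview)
Your argument is correct and follows the same architecture as the paper's: show $[D]$ lies on the boundary of the nef cone, decompose a hypothetical effective representative into prime components, argue that each component must be numerically proportional to $D$, and then derive a contradiction from the rationality of N\'eron--Severi coordinates versus the irrationality of the $a_i$. The only cosmetic difference is in how extremality of the ray $\RR_{\geq 0}[D]$ is justified: the paper normalizes to the standard light cone and invokes a supporting hyperplane $H$ with $\{H=0\}\cap\Nef(A)=\RR_{\geq 0}[D]$, whereas you use the functional $(D\cdot -)$ together with the Hodge index theorem directly. These are equivalent, and your route is arguably more concrete.

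One point you should make explicit: your Hodge-index step (``$(\delta\cdot\gamma)=0$ for an effective $\gamma$ forces $\gamma\in\RR\delta$'') needs not just that the form on $\delta^{\perp}$ is negative semidefinite, but also that $\gamma^2\geq 0$, so that $\gamma$ lands in the kernel $\RR\delta$. This holds because on an abelian surface every prime divisor is nef; the paper records exactly this as ``$[\Gamma_1],\ldots,[\Gamma_r]\in\Nef(A)$''. Without that input the implication fails on general surfaces (think of a $(-1)$-curve orthogonal to a nef isotropic class), so it is worth stating.
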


\begin{proof}
Clearly $D$ is nef. If we set $e'_1 = e_1/\sqrt{2}$ and
$e'_i = e_i/\sqrt{\lambda_i}$ for $i=2, \ldots, \rho$, then
\[
\Nef(A) =
\left\{ y_1 e'_1 + y_2 e'_2 + \cdots + y_{\rho} e'_{\rho} \mid 
{y_2}^2 + \cdots + {y_{\rho}}^2 \leq {y_1}^2, \ y_1 \geq 0 \right\}.
\]
Therefore,
as $[D] \in \partial(\Nef(A)_{\RR})$,
we can choose 
\[
H \in \Hom_{\RR}(\NS(A)_{\RR}, \RR)
\]
such that
\[
\text{$H \geq 0$ on $\Nef(A)$}\quad\text{and}\quad
\{ H = 0 \} \cap \Nef(A) = \RR_{\geq 0} [D],
\]
where $[D]$ is the class of $D$ in $\NS(A)_{\RR}$.
We assume that $D$ is numerically equivalent to an effective $\RR$-Cartier divisor 
$c_1 \Gamma_1 + \cdots + c_r \Gamma_r$,
where $c_1, \ldots, c_r \in \RR_{\geq 0}$ and $\Gamma_1, \ldots, \Gamma_r$ are prime divisors on $A$.
As $[D] \not= 0$, we may assume that $c_1, \ldots, c_r \in \RR_{>0}$.
Note that $[\Gamma_1], \ldots, [\Gamma_r] \in \Nef(A)$ and
\[
0 = H([D]) = c_1 H([\Gamma_1]) + \cdots + c_r H([\Gamma_r]),
\]
so that $H([\Gamma_1]) = \cdots = H([\Gamma_r]) = 0$, and hence
$[\Gamma_1], \ldots, [\Gamma_r] \in \RR_{\geq 0}[D]$.
In particular, there is $t \in \RR_{\geq 0}$ with $[\Gamma_1] = t [D]$.
Here we can set 
\[
[\Gamma_1] = b_1 e_1 + \cdots + b_{\rho} e_{\rho} \quad (b_1, \ldots, b_{\rho} \in \QQ).
\]
Thus $b_1 = t$, $b_2 = t a_2, \ldots, b_{\rho} = t a_{\rho}$. 
As $[\Gamma_1] \not= 0$, $t \in \QQ^{\times}$, and hence
$(a_2, \ldots, a_{\rho}) = t^{-1}(b_2, \ldots, b_{\rho}) \in \QQ^{\rho-1}$. This is a contradiction.
\end{proof}
\end{Example}

\begin{Remark}
Let $k$ be an algebraic closure of a finite field and let
$X$ be a normal projective variety over $k$.
Let $\NS(X)$ be the N\'{e}ron-Severi group of $X$ and
$\NS(X)_{\RR} := \NS(X) \otimes_{\ZZ} \RR$.
Let $\Pef(X)$ be the closed cone in $\NS(X)_{\RR}$ generated by pseudo-effective $\RR$-Cartier divisors on $X$.
We assume that $\Pef(X)$ is a rational polyhedral cone, that is,
there are pseudo-effective $\QQ$-Cartier divisors $D_1, \ldots, D_n$ on $X$ such that
$\Pef(X)$ is generated by the classes of $D_1, \ldots, D_n$.
Then the $\QQ$-version of Question~\ref{question:pseudo:effective} implies 
the $\RR$-version of Question~\ref{question:pseudo:effective}.
\end{Remark}

\begin{Example}
This is an example due to Yuan \cite{Yuan}.
Let us fix an algebraically closed field $k$ and an integer $g \geq 2$.
Let $C$ be a smooth projective curve over $k$ and
$f : X \to C$ an abelian scheme over $C$ of relative dimension $g$.
Let $L$ be an $f$-ample invertible sheaf on $X$ such that $[-1]^*(L) \simeq L$ and 
$L$ is trivial along the zero section of $f : X \to C$.

\begin{Claim}
\begin{enumerate}
\renewcommand{\labelenumi}{(\arabic{enumi})}
\item
$[2]^*(L) \simeq L^{\otimes 4}$.

\item
$L$ is nef.
\end{enumerate}
\end{Claim}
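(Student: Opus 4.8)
The plan is to handle the two parts in turn, deriving (1) from the theorem of the cube together with a rigidification along the zero section, and then bootstrapping (2) out of (1) by exploiting that multiplication by $2$ on the self-map $[2]\colon X\to X$ multiplies $L$ by $4$ while leaving the base contribution untouched.

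For (1), I would first pass to the fibres. Since $[-1]\colon X\to X$ is a morphism over $C$, restricting the hypothesis $[-1]^*(L)\simeq L$ to a fibre $X_t:=f^{-1}(t)$ gives $[-1]^*(L_t)\simeq L_t$, so $L_t$ is a symmetric invertible sheaf on the abelian variety $X_t$. The usual consequence of the theorem of the cube (\cite{Mumford}), namely $[n]^*(L_t)\simeq L_t^{\otimes n(n+1)/2}\otimes([-1]^*L_t)^{\otimes n(n-1)/2}$, then yields $[2]^*(L_t)\simeq L_t^{\otimes 4}$ on every fibre. Hence $M:=[2]^*(L)\otimes L^{\otimes -4}$ is fibrewise trivial; as $f$ is an abelian scheme (geometrically integral fibres, $f_*\OO_X=\OO_C$), the seesaw theorem gives $M\simeq f^*(N)$ for some invertible sheaf $N$ on $C$. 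Restricting along the zero section $e\colon C\to X$ and using $[2]\circ e=e$, $f\circ e=\mathrm{id}_C$, and the triviality of $L$ along $e$, I get
\[
N\simeq e^*f^*(N)\simeq e^*(M)\simeq e^*([2]^*(L))\otimes (e^*(L))^{\otimes -4}\simeq \OO_C,
\]
so $M\simeq\OO_X$, i.e. $[2]^*(L)\simeq L^{\otimes 4}$.

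For (2), iterating (1) gives $[2^n]^*(L)\simeq L^{\otimes 4^n}$ for all $n\geq 0$. Fix an ample divisor $A$ on $C$ of large enough degree that $H:=L\otimes f^*(A)$ is ample on $X$, which is possible since $L$ is $f$-ample. Let $C_0$ be an arbitrary irreducible curve on $X$; I must show $d:=\deg(L|_{C_0})\geq 0$. Working on the normalization of $C_0$ and setting $\phi_n:=[2^n]|_{C_0}\colon C_0\to X$, the relation $f\circ[2^n]=f$ freezes the base term, $\phi_n^*(f^*A)=(f|_{C_0})^*(A)$, so with $c:=\deg(f^*(A)|_{C_0})\geq 0$ and $\phi_n^*(L)\simeq (L|_{C_0})^{\otimes 4^n}$ I obtain
\[
\deg(\phi_n^*(H))=4^n d+c.
\]
Since $[2^n](C_0)$ is again a curve (it dominates $C$ if $C_0$ does, and is the isogenous image of $C_0$ if $C_0$ lies in a fibre), $\phi_n$ is non-constant, and ampleness of $H$ forces $\deg(\phi_n^*(H))>0$ for every $n$. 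Were $d<0$, the right-hand side $4^n d+c$ would tend to $-\infty$, a contradiction; hence $d\geq 0$ and $L$ is nef.

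The decisive point in (2) is the identity $f\circ[2^n]=f$: it decouples the fibre and base contributions so that the fibre part grows geometrically like $4^n d$ against a bounded remainder $c$, turning the sign of $d$ into an asymptotic statement. The main places requiring care are the seesaw-plus-rigidification step in (1) over the non-closed base $C$, and the verification that $\phi_n$ stays non-constant in both the horizontal and fibrewise cases; the remaining manipulations are routine degree bookkeeping.
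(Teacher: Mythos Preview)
Your proof is correct and follows essentially the same route as the paper: in (1) you use the fibrewise cube-theorem isomorphism plus seesaw plus rigidification along the zero section exactly as the paper does, and in (2) you use the identity $f\circ[2^n]=f$ together with $[2^n]^*(L)\simeq L^{\otimes 4^n}$ to compare against an ample $L\otimes f^*(A)$, which is precisely the paper's argument recast via pullback degrees rather than the projection formula $(L\otimes f^*(A)\cdot [2^n]_*(\Delta))$. The only cosmetic difference is that the paper restricts attention to horizontal curves (the vertical case being immediate from $f$-ampleness of $L$), whereas you treat both cases uniformly.
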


\begin{proof}
(1) As $\rest{[2]^*(L)}{f^{-1}(x)} \simeq \rest{L^{\otimes 4}}{f^{-1}(x)}$ for all $x \in C$,
there is an invertible sheaf $M$ on $C$ such that $[2]^*(L) \simeq L^{\otimes 4} \otimes f^*(M)$.
Let $Z_0$ be the zero section of $f : X \to C$. Then
\[
\OO_{Z_0} \simeq [2]^*(\rest{L}{Z_0}) = \rest{[2]^*(L)}{Z_0} \simeq \rest{L^{\otimes 4} \otimes f^*(M)}{Z_0} \simeq M,
\]
so that we have the assertion.

(2) Let $A$ be an ample invertible sheaf on $C$ such that
$L \otimes f^*(A)$ is ample.
Let $\Delta$ be a horizontal curve on $X$. As $f \circ [2^n] = f$ and
$[2^n]^*(L) \simeq L^{\otimes 4^n}$ by using (1),
\[
0 \leq (L \otimes f^*(A) \cdot [2^n]_*(\Delta)) = ([2^n]^*(L \otimes f^*(A)) \cdot \Delta)
= (L^{\otimes 4^n} \otimes f^*(A) \cdot \Delta),
\]
so that $(L \cdot \Delta) \geq -4^{-n} (f^*(A) \cdot \Delta)$ for all $n > 0$.
Thus $(L \cdot \Delta) \geq 0$.
\end{proof}

\begin{Claim}
If the characteristic of $k$ is zero and $f$ is non-isotrivial, then
$L$ does not have the Dirichlet property \rom{(}i.e. $L$ is not $\QQ$-effective\rom{)}.
\end{Claim}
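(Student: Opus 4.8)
The plan is to attack $\QQ$-effectivity directly: since $L$ is a genuine line bundle, $L$ is $\QQ$-effective if and only if $H^0(X,L^{\otimes m})\neq 0$ for some $m\geq 1$, so it suffices to prove that $H^0(X,L^{\otimes m})=0$ for every $m\geq 1$. I would push everything down to the base by the projection formula, $H^0(X,L^{\otimes m})=H^0(C,f_*(L^{\otimes m}))$, and study the sheaf $V_m:=f_*(L^{\otimes m})$ on the curve $C$. Because $L$ is $f$-ample and ample line bundles on abelian varieties have no higher cohomology, $R^if_*(L^{\otimes m})=0$ for $i>0$ and $m\geq 1$, so $V_m$ is locally free of rank $h^0(f^{-1}(x),\rest{L^{\otimes m}}{f^{-1}(x)})=m^g d$, where $d:=(L_x^{\,g})/g!$ for $L_x:=\rest{L}{f^{-1}(x)}$ on a fibre.

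The key computation is the slope of $V_m$. First I would record that $(L^{g+1})=0$: applying the projection formula to the finite surjective morphism $[2^n]$, which has degree $4^{ng}$, together with the relation $[2^n]^*(L)\cong L^{\otimes 4^n}$ (obtained by iterating the first claim above), gives $4^{n(g+1)}(L^{g+1})=4^{ng}(L^{g+1})$, forcing $(L^{g+1})=0$. Next, since $T_{X/C}\cong f^*(\mathbb{E}^{\vee})$ for the Hodge bundle $\mathbb{E}:=f_*\Omega^1_{X/C}$, the class $\operatorname{td}(T_{X/C})$ involves only pullbacks of classes on $C$, so a Grothendieck--Riemann--Roch computation collapses to two terms; using $(L^{g+1})=0$ and the fibre-degree identity $f_*(c_1(L)^g)=g!\,d\,[C]$ it yields
\[
\deg V_m=-\tfrac12\,m^g d\,\deg\mathbb{E},\qquad\text{so}\qquad \mu(V_m)=-\tfrac12\deg\mathbb{E}
\]
independently of $m$.

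The characteristic-zero hypothesis and non-isotriviality enter precisely through the sign of $\deg\mathbb{E}$. In characteristic zero the Hodge bundle of a family of abelian varieties is semipositive, and $\deg\mathbb{E}=0$ exactly when the family is isotrivial; hence non-isotriviality gives $\deg\mathbb{E}>0$, so $\mu(V_m)<0$ for all $m\geq 1$. To finish, a nonzero element of $H^0(C,V_m)$ would produce an inclusion $\OO_C\hookrightarrow V_m$ whose saturation is a sub-line-bundle of degree $\geq 0$, forcing $\mu_{\max}(V_m)\geq 0$; this contradicts negativity of the slope as soon as $V_m$ is semistable. Therefore $H^0(C,V_m)=0$ for every $m$, and $L$ is not $\QQ$-effective.

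The hard part is this last input, namely that $V_m=f_*(L^{\otimes m})$ is semistable (equivalently $\mu_{\max}(V_m)<0$): negativity of the slope by itself does not preclude sections of a non-semistable bundle. I would deduce it from the semistability of the Hodge bundle $\mathbb{E}$ in characteristic zero. The route is to use that each $[2^n]$ is finite \'etale in characteristic zero, so by the projection formula $V_{4m}\cong\bigoplus_{\alpha}f_*(L^{\otimes m}\otimes P_\alpha)$, the sum running over the $2$-torsion line bundles $P_\alpha$ in $\operatorname{Pic}^0(X/C)$; together with the theta-group (Heisenberg) description of $f_*(L^{\otimes m})$ this expresses $V_m$ through tensor and symmetric operations on $\mathbb{E}$, from which semistability is inherited. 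It is exactly this positivity that fails over an algebraic closure of a finite field, in agreement with the Dirichlet property expected there.
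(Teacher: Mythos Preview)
Your strategy coincides with the paper's: reduce to $H^0(C,V_m)=0$ with $V_m=f_*(L^{\otimes m})$, compute $\deg V_m$, and use non-isotriviality in characteristic zero to get $\deg\mathbb{E}>0$ and hence $\mu(V_m)<0$. Your GRR computation is exactly the identity the paper quotes from Maillot--R\"ossler \cite{MR}, namely $\deg(\det V_m)^{\otimes 2}\otimes f_*(\omega_{X/C})^{\otimes d_m}$ has degree zero, i.e.\ $2\deg V_m+d_m\deg\mathbb{E}=0$.

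The gap is precisely where you flag it: the passage from $\mu(V_m)<0$ to $H^0(C,V_m)=0$. Your proposed justification does not go through. The decomposition $V_{4m}\cong\bigoplus_\alpha f_*(L^{\otimes m}\otimes P_\alpha)$ is correct, and invoking the theta group is the right instinct, but these ingredients do \emph{not} express $V_m$ as a tensor or symmetric construction in the Hodge bundle $\mathbb{E}$; there is no such formula, and semistability of $\mathbb{E}$ is not what is being inherited. What the theta-group machinery actually gives (and what the paper uses) is much stronger and more direct: after a finite base change of $C$ making all $d_m^{\,2}$-torsion of the generic fibre rational, Mumford's algebraic theta theory \cite{MumDefA} yields an isomorphism
\[
V_m \;\cong\; M^{\oplus d_m}
\]
for a single line bundle $M$ on $C$. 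This bypasses semistability entirely (though of course it implies it): one has $2\deg M+\deg\mathbb{E}=0$, hence $\deg M<0$ and $H^0(C,V_m)=H^0(C,M)^{\oplus d_m}=0$. Since $\QQ$-effectivity is insensitive to finite base change (Lemma~\ref{lem:eff:finite:covering}), this suffices. So your outline is sound, but the last paragraph should be replaced by this base-change-plus-theta argument rather than an appeal to semistability of $\mathbb{E}$.
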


\begin{proof}
The following proof is due to Yuan \cite{Yuan}.
An alternative proof can be found in \cite[Theorem~4.3]{CoPi}.
We need to see that $H^0(X, L^{\otimes n}) = 0$ for all $n > 0$.
We set $d_n = \rank f_*(L^{\otimes n})$.
By changing the base $C$ if necessarily, we may assume that all $(d_n)^2$-torsion points on
the generic fiber $X_{\eta}$ of $f : X \to C$ are
defined over the function field of $C$. By using the algebraic theta theory due to
Mumford (especially \cite[the last line in page 81]{MumDefA}), there is an invertible sheaf $M$ on $C$
such that $f_*(L^{\otimes n}) = M^{\oplus d_n}$.
On the other hand, by \cite{MR}, 
\[
\deg(\det(f_*(L^{\otimes n}))^{\otimes 2} \otimes f_*(\omega_{X/C})^{\otimes d_n}) = 0,
\]
that is, $2 \deg(M) + \deg( f_*(\omega_{X/C})) = 0$.
As $f$ is  non-isotrivial, we can see that $\deg(f_*(\omega_{X/C})) > 0$, so that $\deg(M) < 0$,
and hence the assertion follows.
\end{proof}

If the characteristic of $k$ is positive, we do not know the $\QQ$-effectivity of $L$ in general.
In \cite{Moret}, there is an example with the following properties:
\begin{enumerate}
\renewcommand{\labelenumi}{(\arabic{enumi})}
\item
$g = 2$ and $C = \PP^1_k$.

\item
There are an abelian surface $A$ over $k$ and an isogeny $h : A \times \PP^1_k \to X$
over $\PP^1_k$.
\end{enumerate}

\begin{Claim}
In the above example, $L$ has the Dirichlet property.
\end{Claim}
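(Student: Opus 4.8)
The plan is to pull $L$ back along the isogeny $h$ and to exploit the fact that, over $A \times \PP^1_k$, the abelian scheme becomes constant. First I would invoke Lemma~\ref{lem:eff:finite:covering}: since $h : A \times \PP^1_k \to X$ is an isogeny over $\PP^1_k$, it is a finite surjective morphism of smooth projective varieties over $k$, so $L$ is $\QQ$-effective if and only if $h^*(L)$ is $\QQ$-effective. Thus it suffices to prove that $h^*(L)$ is $\QQ$-effective on $Y := A \times \PP^1_k$. Let $p : Y \to A$ and $q : Y \to \PP^1_k$ denote the two projections, fix a point $t_0 \in \PP^1_k$, and set $M := \rest{h^*(L)}{A \times \{t_0\}}$, which is an ample invertible sheaf on $A$ (it is the pullback of the ample sheaf $\rest{L}{f^{-1}(t_0)}$ under the isogeny $\rest{h}{A \times \{t_0\}} : A \to f^{-1}(t_0)$).

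The key step is to identify $h^*(L)$ structurally as $h^*(L) \simeq p^*(M) \otimes q^*(\OO_{\PP^1_k}(d))$ for some $d \in \ZZ$. Because $\NS(A)$ is discrete and $\PP^1_k$ is connected, the class of $\rest{h^*(L)}{A \times \{t\}}$ in $\NS(A)$ is independent of $t$; hence $\rest{h^*(L)}{A \times \{t\}} \otimes M^{-1}$ lies in $\Pic^0(A)$ for every $t$, and the resulting family defines a morphism $\PP^1_k \to \Pic^0(A)$ to the dual abelian variety. Any morphism from $\PP^1_k$ to an abelian variety is constant, so $\rest{h^*(L)}{A \times \{t\}} \simeq M$ for all $t$. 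Consequently $h^*(L) \otimes p^*(M^{-1})$ is trivial on every fibre $A \times \{t\}$, and the seesaw theorem yields $h^*(L) \otimes p^*(M^{-1}) \simeq q^*(N)$ with $N \in \Pic(\PP^1_k)$, i.e. $N \simeq \OO_{\PP^1_k}(d)$.

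To pin down $d$ I would restrict to the zero section. The zero section of $Y \to \PP^1_k$ is $\{0\} \times \PP^1_k$, and $h$ maps it into the zero section of $f : X \to C$; since $L$ is trivial along the latter, $\rest{h^*(L)}{\{0\} \times \PP^1_k}$ is trivial. On the other hand $\rest{p^*(M)}{\{0\} \times \PP^1_k} \simeq \OO_{\PP^1_k}$ because $p$ contracts $\{0\} \times \PP^1_k$ to the point $0 \in A$, while $\rest{q^*(\OO_{\PP^1_k}(d))}{\{0\} \times \PP^1_k} \simeq \OO_{\PP^1_k}(d)$, so $d = 0$ and $h^*(L) \simeq p^*(M)$. (Alternatively, nefness of $L$ established above gives nefness of $h^*(L)$, and $(h^*(L) \cdot \{a\} \times \PP^1_k) = d$ forces $d \geq 0$, which already suffices.) Finally, $M$ is ample on the abelian surface $A$, so $h^0(A, M) = \chi(M) = (M^2)/2 > 0$ and $M$ is effective; hence $p^*(M)$, and therefore $h^*(L)$, is effective. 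By the reduction of the first step, $L$ is $\QQ$-effective.

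The main obstacle is the structural identification of $h^*(L)$, and within it the seesaw/rigidity argument: one must justify carefully that the fibrewise class is constant because every morphism $\PP^1_k \to \Pic^0(A)$ is constant, and that the twist on $\PP^1_k$ is recovered by seesaw. Conceptually this is precisely where the positive-characteristic hypothesis enters, through the existence of the isogeny $h$: it renders the abelian scheme isotrivial after base change, so the obstruction of the characteristic-zero case — the strict positivity of $\deg(f_*(\omega_{X/C}))$ forced by non-isotriviality — simply disappears, and $h^*(L)$ becomes the pullback of a single effective sheaf $M$ on the constant fibre.
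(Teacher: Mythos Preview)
Your argument is correct and takes a genuinely different route from the paper. The paper never identifies $h^*(L)$ explicitly; instead it pushes forward to $\PP^1_k$ and invokes the Maillot--R\"ossler determinant formula \cite{MR} to obtain $\deg\bigl(\det((p_2)_*(h^*(L)))\bigr)=0$, whence the splitting $(p_2)_*(h^*(L))\simeq\bigoplus_i\OO_{\PP^1_k}(a_i)$ with $\sum a_i=0$ forces some $a_i\geq 0$ and hence a section. Your approach bypasses \cite{MR} entirely: rigidity of maps $\PP^1_k\to\Pic^0(A)$ together with seesaw pins down $h^*(L)\simeq p^*(M)\otimes q^*(\OO_{\PP^1_k}(d))$, and the zero-section restriction (using that an isogeny of abelian schemes preserves identity sections) gives $d=0$. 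This is more elementary and in fact yields the sharper structural statement $h^*(L)\simeq p^*(M)$, from which effectivity is immediate since an ample line bundle on an abelian variety always has sections. The paper's method, by contrast, mirrors the characteristic-zero obstruction argument (where the same formula with $\deg f_*(\omega_{X/C})>0$ forces non-effectivity), so it makes the dichotomy between the two cases transparent; your method trades that parallelism for a cleaner, self-contained proof.
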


\begin{proof}
Replacing $L$ by $L^{\otimes n}$, we may assume that $d := \rank f_*(L) > 0$.
Let 
\[
p_1 : A \times \PP^1_k \to A
\quad\text{and}\quad
p_2 : A \times \PP^1_k \to \PP^1_k
\]
be the projections to $A$ and $\PP^1_k$, respectively.
Note that $h^*(L)$ is symmetric and $h^*(L)$ is trivial along the zero section of 
$p_2$. Since $\omega_{A \times \PP^1_k/P^1_k} \simeq p_1^*(\omega_{A})$,
we have $(p_2)_*(\omega_{A \times \PP^1_k/P^1_k}) \simeq \OO_{\PP^1_k}$, so that, by \cite{MR},
$\deg(\det((p_2)_*(h^*(L)))) = 0$, that is,  if we set 
\[
(p_2)_*(h^*(L)) = \OO_{\PP^1_k}(a_1) \oplus \cdots \oplus \OO_{\PP^1_k}(a_d),
\]
then $a_1 + \cdots + a_d = 0$. Thus $a_i \geq 0$ for some $i$, and hence
\[
H^0(A \times \PP^1_k, h^*(L)) \not= 0.
\]
Therefore, $L$ is $\QQ$-effective by Lemma~\ref{lem:eff:finite:covering}.
\end{proof}

The above claim suggests that the set of preperiodic points of the map $[2] : X \to X$ %given by $(x, \zeta) \mapsto (2x,\zeta)$
is not dense
in the analytification $X^{\an}_v$ at any place $v$ of $\PP^1_k$ with
respect to the analytic topology (cf. \cite{ChMoDsys}).
\end{Example}

\end{document}